\def\R{\mathbb{R}}
\newtheorem{thm}{Theorem}[section]
\newtheorem{prop}[thm]{Proposition}
\newtheorem{rem}[thm]{Remark}
\newtheorem{lem}[thm]{Lemma}
\newcommand{\dif}{\mathrm{d}}
\newcommand{\di}{\mathrm{div}}
\newcommand{\p}{\partial}
\def\P{\mathbb{P}}
\begin{document}
\title{ Smooth solutions for motion of a rigid body
 of general form in an incompressible perfect fluid
}
\author{  Yun Wang$^{\lowercase{a}}$,   Aibin Zang$^{\lowercase{b,c}}$\thanks{Corresponding Author;}\,\,}
\date{}
\maketitle

\begin{center}
$^a$ The Department of Mathematics and Statistics, McMaster
University, \\1280 Main Street West, Hamilton, ON, L8S 4K1, Canada\\
 E-mail: yunwang@math.mcmaster.ca\\

$^b$ The School of Mathematics and Computer Science, Yichun University,\\Yichun City, Jiangxi Province, 336000, P.R. China\\

$^c$ The Institute of Mathematical Sciences, The Chinese University
of Hong Kong,\\ Shatin, NT, Hong Kong\\
 E-mail: zangab05@126.com\\

\end{center}

\begin{abstract}
In this paper, we consider the interactions between a rigid body of
general form and the incompressible perfect fluid surrounding it.
Local well-posedness in the space $C([0, T); H_s)$ is obtained for
the fluid-rigid body system.

\textbf{Keywords}: Euler equations; Fluid-rigid body interaction;
Exterior domain; Classical solutions

\textbf{Mathematics Subject Classification(2000)}: 35Q30; 35Q35
\end{abstract}

\numberwithin{equation}{section}

\numberwithin{equation}{section}

\section{Introduction}

In this paper, we investigate the motion of a solid in an
incompressible perfect fluid. The behavior of the fluid is described
by the Euler equations. The solid we consider is a rigid body, whose movement
consists of translation and rotation.  And its motion conforms
to the Newton's law. Assume that both the fluid and the rigid body
are homogeneous. For simplicity of writing, the density of the fluid
equals to 1.
The domain occupied by the solid at the time is
$\mathcal{O}(t)$, and $\Omega(t) = \mathbb{R}^3\setminus
\overline{\mathcal{O}(t)}$ is the domain occupied by the fluid.
Suppose $\mathcal{O}(0)= \mathcal{O}$ and $\Omega(0)=\Omega$ share a
smooth boundary $\partial\mathcal{O}$(or $\partial\Omega$). The
equations modeling the dynamics of the system read(see also
\cite{RR})
\begin{alignat}{12}
 &\frac{\partial u}{\partial t}+(u\cdot\nabla)u+\nabla p
=f,
&\hspace{10pt}& \text{in}\,\ \Omega(t)\times [0,T],\label{1.1}\\
&\di~ u=0, & &\text{in}\,\ \Omega(t)\times [0,T],\label{1.2}\\
&u\cdot \vec{n}=(h'+\omega\times(x-h(t)))\cdot \vec{n}, &
&\text{on}
\,\ \partial\Omega(t)\times [0,T],&&\label{1.3}\\
&\lim_{|x|\to\infty}u(x,t)=u_{\infty},\label{1.4}\\
&m h''=\int_{\partial\Omega(t)}p\vec{n} {\rm d}\sigma+f_{rb},&
&\text{in}\ [0,T],\label{1.5}\\
&(J\omega)^\prime=\int_{\partial\Omega(t)}(x-h(t))\times p\vec{n}
{\rm d}\sigma+T_{rb}, &
&\text{in}\ [0,T],\label{1.6}\\
&u(x,0)=u_0(x) & & x\in\Omega,\label{1.7}\\
&h(0)=0\in \mathbb{R}^3,\  h'(0)=l_0\in \mathbb{R}^3,\
\omega(0)=\omega_0\in \mathbb{R}^3.\label{1.8}
\end{alignat}
In the above system, $u$ and $p$ are the velocity field and the
pressure of the fluid respectively. $f$ is the external force field
applied to the fluid. $f_{rb}$ and $T_{rb}$ denote the external
force and the external torque for the rigid body respectively. $m$ is
the mass, and $J$ is the inertia matrix moment related to the mass
center of the solid. Suppose the density of the rigid body is
$\rho$, then
$$m = \int_{\mathcal{O}(t)}\rho dx = \int_{\mathcal{O}}\rho dx, $$
and
$$[J(t)]_{kl} =\int_{\bar{\mathcal{O}}(t)}\rho
\left[|x - h(t)|^2 \delta_{kl} - (x- h(t))_{k} (x- h(t))_l
\right]dx.$$
Here $h(t)$  denotes the position of the mass center of the rigid
body and $\delta_{kl}$ is the Kronecker symbol. And denote $J(0)$ by $\bar{J}$. $\omega(t)$ is the
angular velocity of the rigid body. $\vec{n}$ is the unit outward
normal to $\p\Omega(t)$.

 Assume that the center of $\mathcal{O}$ is
the origin, i.e.,
$$\int_{\mathcal{O}} ydy = 0\in \mathbb{R}^3.$$

Let $Q(t)$ be a rotation matrix associated with the angular velocity
$\omega(t)$ of the rigid body, which is the solution of the
following initial value problem:
\begin{equation}\left\{\begin{aligned}
\frac{{\rm d}Q(t)}{dt}&= A(\omega(t))Q(t)\\
Q(0)&=Id.
\end{aligned}\right.\label{1.9}
\end{equation}
Here $$A(\omega) = \left( \begin{array}{ccc}0 & -\omega_3 &
\omega_2\\
\omega_3 & 0 & -\omega_1\\
-\omega_2 & \omega_1 & 0
\end{array}\right),$$
and $Id$ is the identity matrix. Then the domain $\mathcal{O}(t)$ is
given by
$$\mathcal {O}(t)=\{Q(t)y+h(t): y\in\mathcal{O}\}.$$
  For simplicity, we assume that $f=0,
u_{\infty}=0, f_{rb}=0$ and $T_{rb}=0.$

For the case that the fluid is viscous, there have been many results
over the last two decades. The existence of global weak solutions of
the above system was proved by \cite{DE1, DE2, GLS, HS1, HS2, Ju, MST, Se}.
If the rigid body is a disk in $\mathbb{R}^2$, T.Takahashi and
M.Tucsnak\cite{TT} showed the existence and uniqueness of global
strong solutions. Later, P.Cusmille and T.Takahashi \cite{CT}
extended the result to general rigid body case in $\mathbb{R}^2$.
They also proved the local existence and uniqueness of strong
solutions in $\mathbb{R}^3$.

For the case that the fluid is inviscid, \cite{ORT2} dealt with this problem
first. When the solid is of $C^1$ and piecewise $C^2$ boundary, and the fluid fills in $\mathbb{R}^2$, a unique
global classical solution was obtained under some assumption on the
initial vorticity in \cite{ORT2}. A global weak solution was
constructed in \cite{WX2} when the initial data belongs to
$W^{1,p}$, $p>\frac43$. Recently, C. Rosier and L. Rosier\cite{RR}
proved the local existence of $W^{s,2}$-strong solutions for $d\geq
2$, $s\geq [d/2]+2$ and the solid is a ball. The key idea is to make
use of the Kato-Lai theory, which was originated from \cite{KL}.

At the same time, the fluid-rigid body system which occupies a bounded domain was also studied. In particular,
 \cite{HMT} proved the existence and uniqueness of strong solution for the three dimensional case. The approach of \cite{HMT} follows closely the idea in \cite{BB}, which is used to
study the classical Euler equations. Their method also applies to
the case that there are several solids in the fluid.

In this paper, we plan to extend the result of \cite{RR} to a more
general setting. We will study the case that the solid is smooth and of
a general form. As shown by the
system itself, it is a free boundary problem. To deal with the free boundary problem,
usually the first step is to fix the boundary. To fulfill that, \cite{RR} made a translation of
the coordinate system. However, this special transformation can only be applied to the case that
the solid is a ball. The solid in this paper is of general form, hence a different change of coordinates
from \cite{RR} should be introduced. As said before, the motion of the solid is a rigid body movement, so a natural
idea is to use a coordinate transformation consisting of the translation and
rotation of the solid. We tried in this way. As will be shown in section 2, after the transformation, the new
equivalent system has some term difficult to control.  So we gave up the idea and then applied a new transformation which
coincides with the movement of the solid in its neighborhood and becomes identity when far away from it.
The concrete form of the transformation will be given in section 2. In fact, this kind of transformation has
been used by \cite{IW, CT, DEH}.

For the new equivalent system after the transformation,  we will use the Kato-Lai theory\cite{KL} to construct a sequence of
approximate solutions and prove the limit is the solution required.  The Kato-Lai theory is a Galerkin method in spirit. One can construct the Galerkin
approximation by himself or herself. We use the theory here directly to avoid unnecessary details.
By the way, the method we apply here can also be used to deal with the
several-solids case after some minor modification.

\section{Transformed equations and main result}

As noted above, to fix the boundary,  one method is to use the coordinates
transformation. As shown in \cite{ORT1, ORT2}, a direct way is the
following one consisting of translation and rotation,
\begin{eqnarray*}
&x = Q(t)y + h(t),\ \ \ \ \ \ &\bar{u}(y,t)=Q(t)^T u(Q(t)y+h(t),t),\\
&\bar{p}(y,t)=p(Q(t)y+h(t),t),\ \ \ \ \ \ &\bar{h}(t)=\int_0^tQ(s)^T
h'(s)ds,\\ & \bar{J}=J(0),\ \ \ \ \ \ \ \ &\bar{\omega}(y,t)=Q(t)^T
\omega,
\end{eqnarray*}
where $Q(t)$ is given in section 1 and $Q(t)^T$ is the transpose of
$Q(t)$.

After the transformation, an equivalent system is obtained as
follows:
\begin{alignat}{12}
& \frac{\partial \bar{u}}{\partial
t}+[(\bar{u}-\bar{h}'-\bar{\omega}\times
y)\cdot\nabla]\bar{u}+\bar{\omega}\times\bar{u}+\nabla \bar{p} =0,
&\hspace{10pt}& \text{in}\,\ \Omega\times [0,T],\label{2.1}\\
&{\rm div}~\bar{u}=0, & &\text{in}\,\ \Omega\times [0,T],\label{2.2}\\
& \bar{u}\cdot \vec{n}=(\bar{h}'+\bar{\omega}\times y)\cdot
\vec{n}, & &\text{on}
\,\ \partial\Omega\times [0,T],&&\label{2.3}\\
&m\bar{h}''=\int_{\partial\Omega}\bar{p}\vec{n} {\rm d}\sigma-m
\bar{\omega}(t)\times\bar{h}'(t), &
&\text{in}\ [0,T],\label{2.4}\\
&\bar{J}\bar{\omega}'=\int_{\partial\Omega}y\times \bar{p}\vec{n}
{\rm
d}\sigma+\left(\bar{J}\bar{\omega}(t)\right)\times\bar{\omega}(t),&
&\text{in}\ [0,T],\label{2.5}\\
&\bar{u}(y,0)=u_0, & & y\in\Omega,\label{2.6}\\
&\bar{h}(0)=0,\,\, \bar{h}'(0)=l_0,\,\, \bar{\omega}(0)=\omega_0.
\label{2.7}
\end{alignat}

The new problem is a fixed boundary problem now. However, there is a
term $[\bar{\omega}\times y)\cdot\nabla)]\bar{u}$, whose coefficient
become unbounded at large spatial distance. For the 2D case, the
difficulty was overcome in \cite{ORT2} by assuming that $u_0$
belongs to some weighted space. The method there depends strongly on the fact
that the vorticity satisfies a transport equation in 2D.
However, the fact does not hold any more in 3D. To avoid this difficulty, we will use another change of
variables. The new transformation coincides with $Q(t)y + h(t)$ in a
neighborhood of the solid and becomes identity when far away from
it. In fact, this transformation was initially applied by Inoue and Wakimoto \cite{IW}, later  by Dintelmann etc.\cite{DEH}  and Cusmille and Takahashi \cite{CT}

More precisely, for a pair of continuous vector-valued functions $(l(t),\omega(t))$, let
\begin{equation}\label{2.8-add}h(t) = \int_0^t l(s) ds, \ \ \ t\in [0, T], \end{equation}
and
\begin{equation}\label{2.8}
V(x,t)= l(t)+\omega(t)\times(x-h(t)),\,\,
(x,t)\in\mathbb{R}^3\times[0,T], \end{equation} which is a rigid
body movement.

Choose a smooth function $\xi:\mathbb{R}^3\to\mathbb{R}$ with
compact support such that $\xi(y)=1$ in a neighborhood of
$\bar{\mathcal{O}}$, and set
$$\psi(x,t)=\xi\left(Q(t)^T(x-h(t))\right),$$
where $Q(t)$ is given by (\ref{1.9}).

Then  introduce the functions $W$ and $\Lambda$,
\begin{equation}W(x,t)=\frac12 l(t)\times(x-h(t))+\frac{|x-h(t)|^2}{2}\omega,\label{2.9}\end{equation}
\begin{equation}\Lambda(x,t) =\psi V+\nabla\psi\times W.\label{2.10}\end{equation}
 It is easy to check that $\Lambda$ satisfies the following two lemmas(or refer to \cite{CT, DEH}).

\begin{rem}In what follows you will see the function $\Lambda$ will produce a transformation which coincides with $Q(t)y +h(t)$
in the neighborhood of the solid and becomes identity when far away from it.
The reason why we use $\Lambda$ instead of $\psi V$ is that we need the  function  to be of divergence free.  The function $W$ is introduced to eliminate
the divergence of $\psi V$.
\end{rem}

\begin{lem}
\begin{itemize}
\item[(1)] $\Lambda(x,t)=0$, if $x$ is far away from $\mathcal{O}(t)$;
\item[(2)] $\Lambda(x,t)=l(t)+\omega(t)\times(x-h(t))$ in $\mathcal{O}(t)\times[0,T]$;
\item[(3)] ${\rm div}~\Lambda=0$ in $\mathbb{R}^3\times[0,T]$;
\item[(4)] For all $t\in[0,T],\Lambda(\cdot,t)$ is a $C^{\infty}(\mathbb{R}^3,\mathbb{R}^3)$
function. Moreover, for every $s\in \mathbb{N}$, $\|\Lambda(\cdot,
t)\|_{W^{s,\infty}(\mathbb{R}^3)}\le C(s, T)(|l(t)|+|\omega(t)|)
$;
\item[(5)]For all $x\in\mathbb{R}^3$, the function $\Lambda(x,\cdot)$ is in
$C^0([0,T];\mathbb{R}^3)$,
 provided that $l,\omega\in C^0[0,T]$.
\end{itemize}
\end{lem}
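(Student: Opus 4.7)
My plan is to verify the five statements in turn directly from the definitions, concentrating effort on the divergence-free property (3), which is the only claim that demands a genuine vector-calculus computation. Parts (1) and (2) fall out of the compact support of $\xi$ together with the transport law $\mathcal{O}(t) = h(t) + Q(t)\mathcal{O}$: for (1), $\psi(\cdot,t)$ and hence $\nabla\psi$ and $\Lambda$ are supported in the compact translated-rotated copy $h(t) + Q(t)\,\mathrm{supp}(\xi)$; for (2), if $x \in \mathcal{O}(t)$ then $Q(t)^T(x - h(t)) \in \mathcal{O}$, which lies in the interior of the set where $\xi \equiv 1$, so $\psi \equiv 1$ on a whole neighborhood of $\mathcal{O}(t)$, $\nabla\psi \equiv 0$ there, and therefore $\Lambda = \psi V = V = l(t) + \omega(t)\times(x-h(t))$ on $\mathcal{O}(t)$.

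For (3) I will expand
\[
\di \Lambda = \di(\psi V) + \di(\nabla\psi \times W) = \nabla\psi \cdot V + \psi\,\di V + W \cdot (\nabla\times\nabla\psi) - \nabla\psi \cdot (\nabla\times W),
\]
using the product rule and the identity $\di(A\times B) = B\cdot(\nabla\times A) - A\cdot(\nabla\times B)$. The second term on the right vanishes because $V$ is a rigid-body velocity field, and the third because the curl of a gradient is zero, leaving $\di\Lambda = \nabla\psi \cdot (V - \nabla\times W)$. A direct calculation based on $\nabla\times(a\times r) = 2a$ for a constant vector $a$ and $\nabla\times\bigl(\tfrac12 |r|^2\omega\bigr) = r\times\omega$, with $r = x - h(t)$, shows that $V - \nabla\times W$ is proportional to $\omega\times(x-h)$; since $\psi$ is rotation-invariant in its argument (so that $\nabla\psi$ points in the radial direction $x - h$, noting $|Q^T(x-h)|=|x-h|$), the inner product $\nabla\psi \cdot (\omega\times(x-h))$ vanishes and (3) follows.

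Items (4) and (5) are then essentially bookkeeping. Smoothness in $x$ is immediate since $\xi \in C^\infty$ with compact support and $V, W$ are polynomials in $x - h$; the $W^{s,\infty}$ estimate in (4) follows from the linearity of $V$ and $W$ in $(l,\omega)$ together with the fact that the $W^{s,\infty}$-norm of $\psi$ is controlled in terms of $\xi$ and the $C^s$-norm of $Q(t)$, which is bounded on $[0,T]$. Continuity in $t$ at each fixed $x$ in (5) reduces to the continuity of $Q$, $h$, $l$, and $\omega$. The hardest part of the argument is the final step of the divergence computation: the algebraic form of $W$ has been engineered precisely so that $V - \nabla\times W$ lies in the azimuthal direction $\omega\times(x-h)$, orthogonal to the radial direction of $\nabla\psi$; this compatibility is what forces $\di\Lambda \equiv 0$, and once it is in hand, all five claims of the lemma are verified.
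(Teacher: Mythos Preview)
Your treatment of (1), (2), (4), and (5) is fine, and your vector-calculus reduction of (3) to the identity
\[
\di\Lambda \;=\; \nabla\psi\cdot\bigl(V - \nabla\times W\bigr)
\]
is correct. The gap is in the last step. You assert that ``$\psi$ is rotation-invariant in its argument (so that $\nabla\psi$ points in the radial direction $x-h$, noting $|Q^T(x-h)|=|x-h|$)''. This inference is invalid: $\psi(x,t)=\xi\bigl(Q(t)^T(x-h(t))\bigr)$ depends on $x$ only through $|x-h|$ \emph{if and only if} $\xi$ itself is a radial function of $y$. The paper makes no such assumption on $\xi$; it is merely a smooth compactly supported cutoff equal to $1$ near $\bar{\mathcal{O}}$, and since the body $\mathcal{O}$ is of general shape there is no reason to expect $\xi$ radial. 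In general $\nabla_x\psi = Q\,\nabla_y\xi\big|_{y=Q^T(x-h)}$, which need not be parallel to $x-h$, and then $\nabla\psi\cdot(\omega\times(x-h))$ has no reason to vanish.

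The correct mechanism is different and does not rely on any special direction of $\nabla\psi$: the vector potential $W$ is constructed so that $\nabla\times W = V$ \emph{exactly}, whence $V-\nabla\times W\equiv 0$ and $\di\Lambda=0$ for \emph{any} smooth cutoff. Equivalently, one recognizes $\Lambda=\nabla\times(\psi W)$, which is manifestly divergence-free. With $r=x-h$ one has $\nabla\times\bigl(\tfrac12\,l\times r\bigr)=l$ and $\nabla\times\bigl(\tfrac12|r|^2\omega\bigr)=r\times\omega=-\,\omega\times r$, so this requires
\[
W \;=\; \tfrac12\,l\times(x-h)\;-\;\tfrac{|x-h|^2}{2}\,\omega,
\]
i.e.\ a minus sign in front of the second term. (The plus sign in the displayed formula~(2.9) is a typo; compare the analogous construction with the correct sign in the proof of Lemma~3.3.) Your computation with the plus sign correctly produced the leftover $2\,\omega\times r$, but the remedy is to fix the sign in $W$, not to impose an unjustified radial structure on $\psi$. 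If you prefer to keep your orthogonality argument, you must \emph{add} the hypothesis that $\xi$ is chosen radially symmetric (which is permissible, since one may take $\xi\equiv 1$ on a large ball containing $\bar{\mathcal{O}}$), and say so explicitly.
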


Next, consider the vector field $X(y,t)$ which satisfies
\begin{equation}\left\{\begin{aligned}
\frac{\partial X(y,t)}{\partial t}&=\Lambda(X(y,t),t), \,\, \ t\in(0,T],\\
X(y,0)&=y\in\mathbb{R}^3.
\end{aligned}\right.\label{2.11}
\end{equation}

\begin{lem} For every $y\in\mathbb{R}^3$, the initial-value problem (\ref{2.11}) admits
 a unique solution $X(y,\cdot):[0,T]\to\mathbb{R}^3,$ which is a $C^1$ function on $[0,T]$. Moreover,
 the solution has the following properties:\\

(1) For all $t\in[0,T]$,the mapping $y\mapsto X(y,t)$ is a
$C^\infty$ diffeomorphism  from $\mathcal{O}$  onto
$\mathcal{O}(t)$ and from $\Omega$ onto $\Omega(t)$.\\

(2) Denote by $Y(\cdot,t)$ the inverse of $X(\cdot,t).$ Then for
every $x\in\mathbb{R}^3$ the mapping $t\mapsto Y(x,t)$ is
$C^1$-continuous and satisfies the following initial value problem,
\begin{equation}\left\{\begin{aligned}
\frac{\partial Y(x,t)}{\partial t}&=-
\left[\frac{\partial X(Y(x,t),t)}{\partial y}\right]^{-1}\Lambda(x, t), \,\, \ t\in(0,T],\\
Y(x,0)&=x\in\mathbb{R}^3.
\end{aligned}\right.\label{2.12}
\end{equation}

(3) For every $x, y\in\mathbb{R}^3$ and for every $t\in[0,T]$, the
determinants of the Jacobian matrices $J_X$ of $X(y,t)$ and $J_Y$ of
$Y(x,t)$ both equal to 1, i.e.,
\begin{equation}
\det \left(J_X(y,t)\right)= \det\left(
J_Y(x,t)\right)=1.\label{2.13}
\end{equation}
\end{lem}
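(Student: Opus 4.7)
The plan is to combine Cauchy--Lipschitz with Liouville's formula, and then exploit the explicit form of $\Lambda$ on $\mathcal{O}(t)$. The preceding lemma gives us essentially everything we need about $\Lambda$: it is smooth in $x$, continuous in $t$, divergence--free, and $W^{s,\infty}$--bounded uniformly on $[0,T]$. So first I would invoke the classical Cauchy--Lipschitz theorem: because $\Lambda(\cdot,t)$ is globally Lipschitz with a constant controlled by $C(T)(|l(t)|+|\omega(t)|)\in L^{\infty}(0,T)$, the initial value problem \eqref{2.11} has a unique solution $X(y,\cdot)\in C^{1}([0,T];\mathbb{R}^{3})$ defined on the whole interval $[0,T]$, for every $y$. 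The smooth dependence theorem for ODEs, applied inductively using the $W^{s,\infty}$ bounds from property (4) of the previous lemma, then yields that $y\mapsto X(y,t)$ is $C^{\infty}$ for each fixed $t$; running the flow backwards from $t$ to $0$ provides a $C^{\infty}$ two--sided inverse $Y(\cdot,t)$, so $X(\cdot,t)$ is a $C^{\infty}$ diffeomorphism of $\mathbb{R}^{3}$.

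The heart of part (1) is to identify the image of $\mathcal{O}$. The idea is to use the rigid motion $\tilde X(y,t):=Q(t)y+h(t)$ as a candidate solution on $\mathcal{O}$. Differentiating gives
\begin{equation*}
\partial_{t}\tilde X=Q'(t)y+l(t)=A(\omega)Q(t)y+l(t)=\omega(t)\times(\tilde X-h(t))+l(t)=V(\tilde X,t),
\end{equation*}
and for $y\in\mathcal{O}$ one has $\tilde X(y,t)\in\mathcal{O}(t)$ by the very definition of $\mathcal{O}(t)$, so property (2) of the previous lemma yields $\Lambda(\tilde X,t)=V(\tilde X,t)$. Hence $\tilde X$ solves \eqref{2.11} on $\mathcal{O}$, and uniqueness forces $X(y,t)=Q(t)y+h(t)$ there; in particular $X(\mathcal{O},t)=\mathcal{O}(t)$. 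Since $X(\cdot,t)$ is a global diffeomorphism of $\mathbb{R}^{3}$ and preserves the boundary $\partial\mathcal{O}$, it must send the complement $\Omega$ onto $\Omega(t)$.

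For part (2), I would differentiate the identity $X(Y(x,t),t)=x$ in $t$. The chain rule yields
\begin{equation*}
\tfrac{\partial X}{\partial y}\bigl(Y(x,t),t\bigr)\,\partial_{t}Y(x,t)+\Lambda\bigl(X(Y(x,t),t),t\bigr)=0,
\end{equation*}
and since $X(Y(x,t),t)=x$, inverting the Jacobian (which is invertible because $X(\cdot,t)$ is a diffeomorphism) gives exactly \eqref{2.12}. The $C^{1}$--continuity in $t$ of $Y(x,\cdot)$ follows from that of $X(\cdot,\cdot)$ and the smoothness of matrix inversion.

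For part (3), I would appeal to Liouville's formula: along the flow of $\Lambda$,
\begin{equation*}
\frac{d}{dt}\det J_{X}(y,t)=(\mathrm{div}\,\Lambda)(X(y,t),t)\,\det J_{X}(y,t).
\end{equation*}
By property (3) of the previous lemma $\mathrm{div}\,\Lambda\equiv 0$, and $\det J_{X}(y,0)=\det\mathrm{Id}=1$, so $\det J_{X}\equiv 1$; the identity $\det J_{Y}=1$ then follows from $J_{Y}(x,t)=J_{X}(Y(x,t),t)^{-1}$. I do not foresee a serious obstacle, but the subtle step is part (1): one must be careful that the uniqueness argument is applied with $\tilde X$ understood as a solution of the full ODE on $\mathbb{R}^{3}$ rather than as a solution of a restricted problem on $\mathcal{O}$, which is why the invariance property $\tilde X(y,t)\in\mathcal{O}(t)$ and the coincidence $\Lambda=V$ on $\mathcal{O}(t)$ must be used in tandem.
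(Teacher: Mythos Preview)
Your argument is correct and complete. The paper itself does not supply a proof of this lemma: it simply states that the two lemmas about $\Lambda$ and the flow $X$ ``are easy to check (or refer to \cite{CT, DEH})'' and moves on. What you have written is precisely the standard argument one finds in those references---Cauchy--Lipschitz for global existence, smooth dependence on initial data, the explicit verification that the rigid motion $Q(t)y+h(t)$ solves \eqref{2.11} on $\mathcal{O}$ (using $\Lambda=V$ there) so that $X$ carries $\mathcal{O}$ onto $\mathcal{O}(t)$, implicit differentiation of $X(Y(x,t),t)=x$ for \eqref{2.12}, and Liouville's formula for \eqref{2.13}. There is nothing to correct; your caution about the uniqueness step in part~(1) is well placed and your resolution of it is the right one.
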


\vspace{3mm}Let \begin{equation}\label{2.14}\begin{array}{ll}
 x = X(y,t),\ \ \ & v(y,t)=J_Y(X(y,t),t)u(X(y,t),t),\\
 q(y,t)=p(X(y,t),t),\ \ \ & H(t) = Q(t)^T h(t),\ \ \  L(t)=Q(t)^Tl(t),
\end{array}
\end{equation}
and $R(t)$ is the vector-valued function satisfying
\begin{equation}\label{2.14-add}
 A(R(t))= Q(t)^TA(\omega(t))Q(t).
\end{equation}

Denote
\begin{equation}\label{2.15}\left\{\begin{aligned}
&G(y, t)= \left(g^{ij}(y, t)
\right)=\left(\sum_{k=1}^{3}\frac{\partial Y_i}{\partial x_k}(X(y,
t), t)
\frac{\partial Y_j}{\partial x_k}(X(y,t), t)\right),\\
&g_{ij}=\sum_{k=1}^{3}\frac{\partial X_k}{\partial y_i}(y,t)\frac{\partial X_k}{\partial y_j}(y,t),\\
& \Gamma^k_{i,j}(y,
t)=\frac{1}{2}\sum_{l=1}^{3}g^{kl}\left\{\frac{\partial
g_{il}}{\partial y_j} +\frac{\partial g_{jl}}{\partial
y_i}-\frac{\partial g_{ij}}{\partial y_l}\right\}.
\end{aligned}\right.
\end{equation}

Now one can transform the original system (\ref{1.1})-(\ref{1.8})
into the following system, which is a fixed boundary problem, (see
\cite{CT,Ko})
\begin{alignat}{12}
 &\frac{\partial v}{\partial t}+M v+N v+G\cdot\nabla q=0,
&\hspace{10pt}& \text{in}\,\ \Omega\times [0,T],\label{2.16}\\
&{\rm div}~v=0, & &\text{in}\,\ \Omega\times [0,T],\label{2.17}\\
&v(y,t)\cdot \vec{n}=(L(t)+R(t)\times y)\cdot \vec{n}, &
&\text{on}
\,\ \partial\Omega\times [0,T],&&\label{2.18}\\
&m L'(t)=\int_{\partial\Omega}q \vec{n} {\rm d}\sigma- m
R(t)\times L(t), &
&\text{in}\ \ [0,T],\label{2.19}\\
&\bar{J}R'(t)=\int_{\partial\Omega}y\times q\vec{n} {\rm
d}\sigma+\bar{J} R(t)\times R(t), &
&\text{in}\ \ [0,T],\label{2.20}\\
&v(y,0)=u_0(y), \,\, & &y\in\Omega,\label{2.21}\\ & H(0)=0,\ \ \ \
L(0)=l_0,\ \ \ \ R(0)=\omega_0.\label{2.22}
\end{alignat}
where
\begin{equation}\left\{\begin{aligned}
&(M v)_i=\sum_{j=1}^3\frac{\partial Y_j}{\partial t}\frac{\partial v_i}{\partial y_j}+\sum_{j,k=1}^3\left\{\Gamma^i_{j,k}\frac{\partial Y_k}{\partial t}+\frac{\partial Y_i}{\partial x_k}\frac{\partial^2 X_k}{\partial t\partial y_j}\right\}v_j;\\
&(N v)_i=\sum_{j=1}^3v_j\frac{\partial v_i}{\partial y_j}+\sum_{j,k=1}^3\Gamma^i_{j,k}v_j v_k;\\
&(G\cdot\nabla q)_i=\sum_{j=1}^3g^{ij}\frac{\partial q}{\partial y_j}.
\end{aligned}\right.\label{2.23}
\end{equation}

Now the main result in this paper reads
\begin{thm}
Suppose that $s> \frac52,\,\, u_0\in H^s(\Omega)$ and $u_0\cdot
\vec{n} = (l_0+\omega_0\times y)\cdot \vec{n}$ on $\p\Omega$.
Then there exist some $T_0>0$ and a unique solution $(v,q,L,R)$ of
(\ref{2.16})-(\ref{2.22}) such that $$v\in
C([0,T_0];H^s(\Omega)),\,\, \nabla q\in C([0,T_0];
H^{s-1}(\Omega)),$$ and $$L,R\in C^1([0,T_0]; \mathbb{R}^3).$$ Such
a solution is unique up to an arbitrary function of $t$ which may be
added to $q$. Furthermore, $T_0$ does not depend on $s$.
\end{thm}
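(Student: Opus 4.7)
The plan is to apply the Kato--Lai abstract Galerkin method \cite{KL} to a divergence-free reformulation of (\ref{2.16})--(\ref{2.22}) in which the pressure and the ODE unknowns are treated as secondary functionals of the velocity. The first preparatory step is to lift the inhomogeneous boundary condition (\ref{2.18}): using the rigid motion $L(t)+R(t)\times y$ together with an extension analogous to $\Lambda$ but built from $(L,R)$, I would decompose $v = \widetilde v + \Phi(L,R)$, where $\Phi(L,R)$ is smooth, divergence-free, and carries the boundary datum, so that $\widetilde v$ lies in the fixed Hilbert space
\begin{equation*}
H_s := \{w \in H^s(\Omega) : \di w = 0 \text{ in } \Omega,\ w\cdot \vec n = 0 \text{ on } \p\Omega\}.
\end{equation*}
Next, I would eliminate $q$: taking the divergence of (\ref{2.16}) and reading the Neumann data off (\ref{2.16}) restricted to $\p\Omega$ yields an elliptic problem whose right-hand side is quadratic in $(v,L,R)$ and whose solution expresses $\nabla q$ as a functional of $(v,L,R)$. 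Inserting the resulting surface integrals of $q$ back into (\ref{2.19})--(\ref{2.20}) likewise expresses $(L',R')$ in terms of $(v,L,R)$, so the whole system collapses to an evolution equation on $H_s \times \R^3 \times \R^3$.

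On this framework I would set up a Galerkin scheme using a Hilbert basis of divergence-free vector fields tangent to $\p\Omega$, and seek two uniform bounds on the approximations. The first is an $H^s$ estimate on the velocity coupled to $|L|+|R|$: applying $(I-\Delta)^{s/2}$ to (\ref{2.16}), pairing with $(I-\Delta)^{s/2}\widetilde v$ in $L^2$, and using Kato--Ponce commutator and Moser product estimates to handle the nonconstant coefficients $M$, $N$, $G$. The required smoothness of these coefficients is traced back to the flow $X$, which via Lemma 2.2(4) and a Gr\"onwall argument on (\ref{2.11}) is bounded in $W^{s+1,\infty}$ as long as $|L|+|R|$ stays bounded. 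The ODEs (\ref{2.19})--(\ref{2.20}) contribute quadratic terms in $(L,R)$ and a boundary integral of $q$, both of which are controlled by the trace theorem and the Neumann estimate. Assembling everything yields
\begin{equation*}
\frac{d}{dt}\bigl(\|\widetilde v\|_{H^s}^2+|L|^2+|R|^2\bigr) \le C\,\Psi\bigl(\|\widetilde v\|_{H^s}+|L|+|R|\bigr)
\end{equation*}
with $\Psi$ locally bounded, which provides a uniform lifespan $T_0$ for the Galerkin sequence. The second bound is an $L^2$ estimate for the difference of two approximations (or two solutions): the highest-order commutators cancel by the transport structure of the equation, leaving a Gr\"onwall estimate that makes the sequence Cauchy in $C([0,T_0];L^2)$ while remaining bounded in $L^\infty(0,T_0;H_s)$.

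The main obstacle is the tight coupling: the PDE coefficients depend on $(L,R)$ through the flow $X$, while $(L,R)$ themselves evolve through a boundary integral of $q$ that is determined by $v$ and by the same coefficients. This is what makes the classical Kato quasilinear scheme awkward, and what the Kato--Lai energy-Galerkin framework is tailored for, since it only demands energy-type estimates on truncations. Once the two bounds above are in hand, standard compactness and interpolation produce a limit with $v \in C([0,T_0];H^s(\Omega))$ (strong continuity in time coming from a mollification argument of Bona--Smith type), $\nabla q \in C([0,T_0];H^{s-1}(\Omega))$ by elliptic regularity, and $L,R \in C^1([0,T_0];\R^3)$ from the ODEs. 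Uniqueness follows from the same $L^2$ difference estimate applied to two solutions. Finally, since the energy inequality already closes in $H^{s_0}$ for any fixed $s_0\in (5/2,s]$, the existence time $T_0$ can be chosen to depend only on $\|u_0\|_{H^{s_0}}+|l_0|+|\omega_0|$, and is therefore independent of $s$.
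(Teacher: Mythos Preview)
Your overall architecture---Kato--Lai for existence, an elliptic problem to recover $q$ from $v$, an $L^2$ contraction for uniqueness, and a blow-up criterion for the $s$-independence of $T_0$---matches the paper's. But there are two substantive discrepancies, one structural and one a genuine gap.

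\textbf{Structural difference.} You lift the boundary datum by writing $v=\widetilde v+\Phi(L,R)$ and then work on the triple $(\widetilde v,L,R)\in H_s\times\R^3\times\R^3$. The paper instead \emph{extends} the velocity to all of $\R^3$ by declaring $v=L+R\times y$ on $\mathcal O$, and works in the single space $H_s=\{u:u|_\Omega\in H^s(\Omega)\}\cap L^2(\R^3)$ together with the Leray-type projector $\P$ onto $\tilde X_*=\{u\in L^2(\R^3):\di u=0,\ u|_{\mathcal O}\text{ rigid}\}$. This packaging is not cosmetic: because the coefficients $X,Y,G,\Gamma$ depend on $(L,R)$ through the flow, the paper does \emph{not} run a Galerkin scheme on the fully coupled nonlinear problem. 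It linearizes by iteration: given $v^{n-1}$, read off $L^{n-1}=l_{\P v^{n-1}}$, $R^{n-1}=\omega_{\P v^{n-1}}$, build the coefficients and the pressure $q^{n-1}$ from these, and then apply Kato--Lai to a problem whose operator $A^n(t,\cdot)$ has frozen coefficients. Your proposal to Galerkin-approximate the fully coupled system directly leaves unexplained how the flow $X$ (hence $G$, $M$, $N$) is defined at the level of a finite-dimensional truncation of $\widetilde v$ while $(L,R)$ are simultaneously unknown.

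\textbf{Genuine gap.} Your $H^s$ energy estimate is obtained by ``applying $(I-\Delta)^{s/2}$ to (\ref{2.16}), pairing with $(I-\Delta)^{s/2}\widetilde v$ in $L^2$, and using Kato--Ponce commutator and Moser product estimates.'' On an exterior domain with boundary this step does not go through: the pseudodifferential operator $(I-\Delta)^{s/2}$ is not available as a bounded map commuting with multiplication or with the divergence-free/tangential constraints, and the Kato--Ponce inequalities are $\R^n$ (or torus) tools. The paper circumvents this by taking $s$ an integer and using the Kato--Lai bilinear form built from the Neumann realization of $\sum_{|\alpha|\le s}(-1)^{|\alpha|}\partial^{2\alpha}$; the top-order transport term is then handled by ordinary integration by parts, using that $(\partial_t Y+\P v^{n-1})$ is divergence-free in $\Omega$ and tangential on $\partial\Omega$. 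If you want a workable version of your estimate, you need to replace the fractional-Laplacian argument by this integer-order, boundary-compatible one. Also note that the elliptic problem for $q$ is not a standard Neumann problem: the boundary condition contains the nonlocal terms $\frac1m\int_{\partial\Omega}q\vec n\,\dif\sigma$ and $\bar J^{-1}\int_{\partial\Omega}y\times q\vec n\,\dif\sigma$, and the paper treats this via Lax--Milgram on $\dot D^{1,2}(\Omega)$ before bootstrapping to $H^s$.
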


\begin{rem}
 Following similar proof in section 7 for the uniqueness of the solution, we can get the stability of the solution. The method is
standard as for the classical Euler equations, so we omit the details.
\end{rem}

\begin{rem}There are still many questions for the fluid-rigid body system. For example, whether the strong solution blows up or not? If there are many solids in the fluid, is there any collision between
these solids?  Which comes first, the collision or the blow up of $\|\nabla v\|_{L^\infty}$?
\end{rem}

For readers' convenience, we list the outline of the following sections. Section 3 will be devoted to some notations or definitions for the
function spaces and some lemmas to be used in the proof of the main result. The preliminaries contain the decomposition of the function spaces associated
with this particular problem, the Kato-Lai theory for the construction of approximate solutions, and the estimates for the coordinate transformation. In section 4, we will
give the a priori estimates for the new pressure term $\nabla q$ in (\ref{2.16}), under the assumption that $\|v\|_{H^s(\Omega)\cap \tilde{X}}$ is bounded. These
estimates will be used for the uniform estimates for the approximate solutions, which are constructed in section 5. Moreover, some uniform
estimates for these solutions are derived at the same time. The convergence of these solutions is studied in section 6. And the limit is
exactly the solution required. In the last section, section 7, we prove the uniqueness of the solution. Making use of the uniqueness result, the continuity of the
solution with respect to time $t$ in some space $H_s$ is also proved.

\section{Preliminaries}

Before stating our main result, we'd like to introduce some
notations and definitions.

Suppose $\mathcal{S}$ is a domain in $\mathbb{R}^3$.
$L^2(\mathcal{S})$ is the space of $L^2$-integrable functions with
the standard inner product $(\cdot, \cdot)_{L^2(\mathcal{S})}$. By
the way, we will not distinguish the scalar function spaces and the
corresponding vector-valued function spaces.

Suppose $s$ is a nonnegative integer, then $$H^s(\mathcal{S})= \{u\in
L^2(\mathcal{S}):\ D^\alpha u \in L^2(\mathcal{S}),\ \ \forall\
\alpha\in \mathbb{N}^3, s.t. , \ |\alpha|\leq s\},$$ with the inner product
$$(u, v)_{H^s(\mathcal{S})} = \sum_{|\alpha|\leq s}(D^\alpha u, D^\alpha v)_{L^2(\mathcal{S})}.$$

The homogeneous Sobolev space
$$ D^{1,2}(\mathcal{S}) = \{u\in L_{loc}^1(\mathcal{S}):\ \nabla u \in
L^2(\mathcal{S})\},$$ with the seminorm
$$|u|_{D^{1,2}(\mathcal{S})} = \|\nabla u\|_{L^2(\mathcal{S})}.$$
If one identifies the two functions $u_1, u_2\in D^{1,2}(\mathcal{S})$
whenever $|u_1 - u_2|_{D^{1,2}(\mathcal{S})} =0$, i.e., $u_1$ and
$u_2$ differ by a constant, the quotient space
$\dot{D}^{1,2}(\mathcal{S})$, with the norm $|\cdot|_{D^{1,2}(
\mathcal{S})}$, is derived. In the following text, without any confusion, we do
not distinguish the elements in $D^{1,2}(\mathcal{S})$ and $
\dot{D}^{1,2}(\mathcal{S})$ very strictly.

Let $B_R(y)$ denote the ball centered at $y$ and with the radius
$R$. $\Omega_R:= \Omega\cap B_R(0)$. Let $\rho =
\frac{m}{|\mathcal{O}|}$, where $|\mathcal{O}|$ stands for the
volume of $\mathcal{O}$. Hence $\rho$ is the density of the solid.
Let $\tilde{X}= L^2(\mathbb{R}^3)$ be endowed with the inner
product, $$( u, v)_{\tilde{X}} = \int_\Omega u(y)\cdot v(y)dy +\rho
\int_{\mathcal{O}} u(y)\cdot v(y)dy.$$

Define $$\tilde{X}_* =\{u\in \tilde{X}:\ {\rm div}~u=0 \ \mbox{in}\
\mathbb{R}^3, \ \exists\ l, \omega\in \mathbb{R}^3, s.t.,\ u= l +
\omega\times y\ \mbox{in}\ \mathcal{O}\},$$ which is a closed
subspace of $\tilde{X}$.

\begin{rem}\ For every $u\in \tilde{X}_*$, and
suppose that $u=l + \omega\times y$ on $\mathcal{O}$. In fact, $l$ and
$\omega$ are unique vectors satisfying the above relation, and
$$l = \frac{1}{m}\int_{\mathcal{O}} \rho u dy,\ \ \ \omega=-\bar{J}^{-1}\int_{\mathcal{O}}
(u\times y)dy. $$
 The fact has been proved, see \cite{DR} or \cite{WX3}. In what follows, we will
denote the vectors $l$, $\omega$  associated with $u\in \tilde{X}_*$ by $l_u$, $\omega_u$.
\end{rem}

\vspace{2mm} Let $H_s = \{u\in \tilde{X}:\ u|_{\Omega}\in
H^s(\Omega)\}$ be endowed with the scalar product
$$(u, v)_{H_s} = (u, v)_{H^s(\Omega)}+ \rho(u,v)_{L^2(\mathcal{O})}.$$
$V_s$ is the space of functions $v\in H_s $ such that $v|_{\Omega}$
belongs to $\mathcal{D}(A)$, where $A$ is the elliptic operator $A
f=\sum_{|\alpha|\le s}(-1)^{|\alpha|}\partial^{2\alpha}f $ with
Neumann boundary conditions, and  $\mathcal{D}(A)\subseteq
H^{2s}(\Omega)$. $V_s$ is endowed with the scalar product
$$(u,v)_{V_s}=(u,v)_{H^{2s}(\Omega)}+\rho(u,v)_{L^2(\mathcal{O})}.$$

As in \cite{RR}, we introduce a bilinear form on
$V_s\times\tilde{X}$:
$$\langle v,u\rangle=\left(\sum_{|\alpha|\le
s}(-1)^{|\alpha|}\partial^{2\alpha}v,u\right)_{L^2(\Omega)}+\rho(v,u)_{L^2(\mathcal{O})}.
$$

Since the main idea in this paper is the Kato-Lai theory, so we'd like to give a brief description of
the theory, which is cited from \cite{RR}. For more details, please refer to \cite{KL}. Let $V, H, X$ be three real separable Banach
 spaces. We say that $\{V, H, X\}$ is an admissible triplet if the following conditions hold.
\begin{itemize}
 \item $V\subset H \subset X$, and the inclusions are dense and continuous.

\item $H$ is a Hilbert space, with inner product $(\cdot, \cdot)_H$ and norm $\|\cdot \|_H
=(\cdot, \cdot)_{H}^{\frac12}.$

\item There is a continuous, nondegenerate bilinear form on $V\times X$, denoted by $\langle \cdot, \cdot\rangle$,
such that
$$\langle v, u\rangle = (v, u)_H\ \ \ \mbox{for all}\ \ v\in V, u\in H.$$
\end{itemize}

Recall that the bilinear form $\langle v, u \rangle $ is continuous and nondegenerate when
\begin{equation}
|\langle v, u\rangle |\leq C\|v\|_{V}\|u\|_{X} \ \ \ \mbox{for some constant }\ C>0;
\end{equation}
\begin{equation}
 \langle v, u\rangle =0 \ \ \mbox{for all }\ \ u\in X \ \mbox{implies}\ v=0;
\end{equation}
\begin{equation}
 \langle v, u\rangle =0\ \ \ \mbox{for all}\ \ v\in V\ \ \mbox{implies} \ u=0.
\end{equation}

A map $A:[0, T]\times H\rightarrow X$ is  said to be sequentially weakly continuous if $A(t_n, v_n)
\rightharpoonup A(t,v)$ in $X$ whenever $t_n \rightarrow t$ and $v_n \rightharpoonup  v$ in $H$. We denote
by $C_w([0, T]; H)$ the space of sequentially weakly continuous functions from $[0, T]$ to $H$, and by
$C_w^1([0, T]; X)$ the space of the functions $u\in W^{1, \infty}(0, T; X)$ such that $\frac{du}{dt}
\in C_w([0, T], X).$

We are concerned with the Cauchy problem
\begin{equation}\label{KL1}\frac{dv}{dt} + A(t,v) =0, \ \ \mbox{with}\  \ v(0)=v_0.\end{equation}

The Kato-Lai existence result for abstract evolution equations is as follows.

\begin{thm}Let $\{V, H, X\}$ be an admissible triplet. Let $A$ be a sequentially weakly continuous map from
$[0, T]\times H$ into $X$ such that
$$\langle v, A(t,v)\rangle \geq -\beta \left(\|v\|_H^2\right)\ \ \ \mbox{for}\ \ t\in [0,T], \ v\in V,$$
where $\beta(r)\geq 0 $ is a continuous nondecreasing function of $r\geq 0$.  For any $v_0\in H$, consider the ODE
$\gamma^\prime (t) = \beta(\gamma(t)), \ \gamma(0)=\|v_0\|_{H}^2,$. Suppose the maximal time of existence of $\gamma$ is $T_0$, then for (\ref{KL1}),
there exists a solution $v$ of (\ref{KL1}) in the class
$$v\in C_w ([0, T_0]; H)\cap C_w^1([0, T_0]; X).$$
Moreover, one has
$$\|v(t)\|_{H}^2\leq \gamma(t), \ \ \ t\in [0,T_0],$$
\end{thm}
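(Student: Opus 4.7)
The plan is to prove this abstract existence result by a Galerkin approximation scheme, using the coercivity assumption $\langle v, A(t,v)\rangle \geq -\beta(\|v\|_H^2)$ as the sole source of a priori control. First I would fix a total sequence $\{e_k\} \subset V$, available by separability, and set $V_n = \operatorname{span}\{e_1, \ldots, e_n\}$. Because the bilinear form agrees with $(\cdot, \cdot)_H$ on $V \times H$ and $V_n \subset V \cap H$, its restriction to $V_n \times V_n$ is the $H$-inner product, hence non-degenerate; this lets me define uniquely, for every $f \in X$, an element $P_n f \in V_n$ by $(w, P_n f)_H = \langle w, f\rangle$ for all $w \in V_n$. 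I would then seek $v_n \in C^1([0, \tau_n]; V_n)$ solving
\begin{equation*}
v_n'(t) = -P_n A(t, v_n(t)), \qquad v_n(0) = v_{n,0},
\end{equation*}
where $v_{n,0} \in V_n$ is chosen so that $v_{n,0} \to v_0$ in $H$ with $\|v_{n,0}\|_H \leq \|v_0\|_H + 1/n$. Since sequential weak continuity of $A$ reduces to ordinary continuity once restricted to a finite-dimensional $V_n$, Peano's theorem produces a local $V_n$-valued solution.

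Next I would extract the energy bound by pairing the Galerkin equation with $v_n(t)$:
\begin{equation*}
\tfrac{1}{2}\frac{d}{dt}\|v_n(t)\|_H^2 = -\langle v_n(t), A(t, v_n(t))\rangle \leq \beta\bigl(\|v_n(t)\|_H^2\bigr).
\end{equation*}
A standard differential-inequality comparison with the scalar ODE $\gamma' = \beta(\gamma)$ then yields $\|v_n(t)\|_H^2 \leq \gamma(t)$ (absorbing a harmless factor into the comparison function if needed), so the local solutions extend to all of $[0, T_0)$ and are uniformly bounded in $L^\infty_{\mathrm{loc}}([0, T_0); H)$. A short contradiction argument — a weakly convergent sequence in $X$ is bounded, combined with sequential weak continuity of $A$ — also forces $A$ to map $H$-bounded sets to $X$-bounded sets, so $A(\cdot, v_n(\cdot))$ is uniformly bounded in $L^\infty_{\mathrm{loc}}([0, T_0); X)$.

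With both bounds in hand I would run the compactness and limit-passage step. For each fixed $w \in V_k$ and every $n \geq k$, the scalar function $t \mapsto \langle w, v_n(t)\rangle = (w, v_n(t))_H$ has derivative $-\langle w, A(t, v_n(t))\rangle$ bounded uniformly in $n$, so it is equi-Lipschitz. An Arzel\`a--Ascoli / Cantor diagonal argument over a countable subset of $\bigcup_k V_k$ dense in $V$ then produces a subsequence with $v_n(t) \rightharpoonup v(t)$ weakly in $H$ uniformly in $t$, giving $v \in C_w([0, T_0]; H)$ with $\|v(t)\|_H^2 \leq \gamma(t)$. Passing $n \to \infty$ in the integrated Galerkin identity
\begin{equation*}
\langle w, v_n(t)\rangle - \langle w, v_n(0)\rangle = -\int_0^t \langle w, A(s, v_n(s))\rangle\, ds
\end{equation*}
via sequential weak continuity of $A$ and dominated convergence — justified by the uniform $X$-bound on $A(\cdot, v_n(\cdot))$ — yields the same identity for $v$. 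Density of $\bigcup V_k$ in $V$ together with non-degeneracy of $\langle \cdot, \cdot\rangle$ promotes this to the equation $v'(t) = -A(t, v(t))$ in $X$, and a final application of sequential weak continuity upgrades $v'$ to $C_w([0, T_0]; X)$, placing $v$ in $C_w^1([0, T_0]; X)$.

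The main obstacle I anticipate is the interchange of weak limits in the nonlinear term: sequential weak continuity produces $A(s, v_n(s)) \rightharpoonup A(s, v(s))$ only at those $s$ where $v_n(s) \rightharpoonup v(s)$ holds in $H$, and a bare $L^\infty(H)$-bound on $v_n$ only supplies such convergence along a subsequence that in principle depends on $s$. The Arzel\`a--Ascoli diagonal step is therefore the technical heart of the proof — it upgrades the extraction to one valid simultaneously for every $s \in [0, T_0]$, which is exactly what enables pointwise convergence of the integrand and the subsequent dominated-convergence step.
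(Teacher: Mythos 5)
The paper does not prove this theorem at all: it is the abstract Kato--Lai existence result, quoted verbatim (via Rosier--Rosier \cite{RR}) from \cite{KL}, and used as a black box in Section 5. So there is no ``paper proof'' to compare against; your proposal is in effect a reconstruction of the original Kato--Lai argument, and it is essentially the standard one: Galerkin subspaces $V_n\subset V$, a projection $P_n$ defined through the nondegenerate pairing, Peano for the finite-dimensional ODE, the coercivity hypothesis giving the energy bound and hence global-in-time control by the comparison ODE, boundedness of $A$ on $H$-bounded sets via weak continuity plus uniform boundedness, an Arzel\`a--Ascoli/diagonal extraction over a countable set of test elements to get $v_n(t)\rightharpoonup v(t)$ for every $t$ simultaneously, and passage to the limit in the integrated identity followed by density of $\bigcup_k V_k$ in $V$ and nondegeneracy of $\langle\cdot,\cdot\rangle$ to recover $v'=-A(t,v)$ in $X$. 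Your identification of the simultaneous-in-$t$ weak convergence as the technical heart is exactly right, and the Pettis/Bochner integrability of $s\mapsto A(s,v(s))$ needed to differentiate the integral identity is available since $X$ is separable.

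Two small points deserve care. First, the energy identity gives $\frac{\dif}{\dif t}\|v_n\|_H^2\le 2\beta(\|v_n\|_H^2)$, so the natural comparison function solves $\gamma'=2\beta(\gamma)$; with the ODE exactly as stated ($\gamma'=\beta(\gamma)$) the bound $\|v(t)\|_H^2\le\gamma(t)$ does not follow, and the factor cannot simply be ``absorbed'' without changing $\gamma$ and hence $T_0$ (the original \cite{KL} statement carries the factor $2$; the discrepancy is in the quotation, not in your scheme, but you should state the comparison ODE consistently). Second, what Peano needs is continuity of $P_n\circ A$ on $[0,T]\times V_n$, not of $A$ itself: on $V_n$ norm convergence implies weak convergence in $H$, sequential weak continuity gives $A(t_k,v_k)\rightharpoonup A(t,v)$ in $X$ only, and it is the finite-rank projection through the fixed functionals $\langle e_i,\cdot\rangle\in X^*$ that upgrades this to norm convergence in $V_n$; your phrasing (``reduces to ordinary continuity'') glosses this, though the needed conclusion is correct. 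Likewise, since $\beta$ is merely continuous, the differential-inequality step should invoke comparison with the maximal solution of the scalar ODE, and the conclusion is naturally obtained on $[0,T']$ for every $T'<\min(T,T_0)$.
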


In fact, it was proved in \cite{RR} that the triplet $\{\tilde{X},H_s,V_s\}$
is admissible for any smooth boundary.

The following lemma gives a decomposition of $L^2(\mathbb{R}^3)$. In particular, the second part is
a replacement of Proposition 3.1 of \cite{RR}.
\begin{lem} Let $G_1^2=\{u\in L^2(\mathbb{R}^3):\ u=\nabla q_1,\,\,  q_1\in L^1_{\rm loc}(\mathbb{R}^3) \}$, and
\begin{eqnarray*}
G_2^2=\left\{u\in L^2(\mathbb{R}^3):\  {\rm div}~u=0\,\, \mbox{in}
\,\,\mathbb{R}^3,\,\,u=\nabla q_2\  \mbox{in}
\ \Omega, q_2\in L^1_{\rm loc}(\Omega), \ \  u=\phi\,\, \mbox{in}\ \mathcal{O},\right. \\
\left. \int_{\mathcal{O}} \phi dy = -\int_{\partial \Omega} q_2
\vec{n} \ \dif\sigma,\ \ \mbox{and}\,\,\int_{\mathcal{O}}
 \phi\times y \dif y=-\int_{\partial\Omega}q_2\vec{n}\times y\,\dif\sigma\right\}.
\end{eqnarray*}
Then (1) $\tilde{X}_*,G_1^2$ and $G_2^2$ are mutually orthogonal in the sense of the standard $L^2$-inner product and
$$L^2(\mathbb{R}^3)=\tilde{X}_*\oplus G_1^2\oplus G_2^2.$$
It means that for every $u\in L^2(\mathbb{R}^3)$,
\begin{equation}u(y) =
\left\{\begin{array}{l} u_1+ \nabla q_1 + \nabla q_2,\ \ y\in \Omega\\
u_1 + \nabla q_1 + \phi,\ \ \ \ \ y\in \mathcal{O}\end{array}\right.
\in \tilde{X}_*\oplus G_1^2\oplus G_2^2. \label{3.1}\end{equation}
Suppose $u_1=l_{u_1}+\omega_{u_1}\times y$ in $\mathcal{O}$, then
\begin{equation}|l_{u_1}|+|\omega_{u_1}|\le C\|u_1\|_{L^2(\mathbb{R}^3)}
\leq C\|u\|_{L^2(\mathbb{R}^3)} \leq C\|u\|_{\tilde{X}}.\label{3.2}\end{equation}

(2)Denote the projector which maps $L^2(\mathbb{R}^3)$
to $\tilde{X}_*$ by $\mathbb{P}$. In fact, $\mathbb{P}$ maps $H_s$ into $H_s$
continuously for any $s\ge 0.$
\end{lem}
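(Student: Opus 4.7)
The plan is to establish the orthogonality first, then verify the direct sum by a Hilbert space complement argument, and finally to deduce the norm bound and the regularity of $\mathbb{P}$ by elliptic arguments. First, for the three pairwise $L^2(\mathbb{R}^3)$-orthogonalities: the pairs $\langle \tilde{X}_*, G_1^2\rangle$ and $\langle G_1^2, G_2^2\rangle$ both reduce to the standard Helmholtz fact that $\nabla q_1 \in L^2(\mathbb{R}^3)$ is $L^2$-orthogonal to any divergence-free $L^2(\mathbb{R}^3)$ field (approximate the divergence-free field by $C_c^\infty$ divergence-free fields and integrate by parts). For $\langle \tilde{X}_*, G_2^2\rangle$, I split the integral into pieces on $\Omega$ and $\mathcal{O}$, integrate by parts on $\Omega$ using $\mathrm{div}\, u_1=0$ together with the fact that $u_1$'s normal trace across $\partial\Omega$ equals $(l_{u_1}+\omega_{u_1}\times y)\cdot\vec{n}$ (continuity of the normal trace of fields that are divergence-free on $\mathbb{R}^3$). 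The two integral constraints built into the definition of $G_2^2$ are precisely what causes the resulting boundary term from $\Omega$ to cancel the bulk term from $\mathcal{O}$.

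Next, since $\tilde{X}_*$ is a closed subspace of $L^2(\mathbb{R}^3)$ (intersection of the closed condition $\mathrm{div}=0$ with a preimage of the 6-dimensional space of rigid motions on $\mathcal{O}$), one has $L^2(\mathbb{R}^3)=\tilde{X}_* \oplus \tilde{X}_*^\perp$. Given the orthogonalities just proved, it remains to show $\tilde{X}_*^\perp \subset G_1^2 + G_2^2$. For $w\in \tilde{X}_*^\perp$ I apply the classical Helmholtz decomposition of $L^2(\mathbb{R}^3)$ to write $w=w_d + \nabla q_1$ with $w_d$ divergence-free and $\nabla q_1 \in G_1^2$; hence $w_d\in \tilde{X}_*^\perp$ as well. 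To identify $w_d$ as an element of $G_2^2$, I test against divergence-free $C_c^\infty(\Omega)$ fields with zero normal trace on $\partial\Omega$ (these extend by zero to elements of $\tilde{X}_*$) and invoke the $L^2(\Omega)$-Helmholtz decomposition to get $w_d|_\Omega=\nabla q_2$; divergence-freeness then forces $\Delta q_2=0$. For the two integral constraints, I construct for each $l,\omega\in\mathbb{R}^3$ a compactly supported divergence-free extension $\tilde{u}\in \tilde{X}_*$ with $\tilde{u}|_\mathcal{O}=l+\omega\times y$ (a standard Bogovskii-type lift); the relation $0=\int w_d \cdot \tilde{u}$, read through the same integration by parts as in the orthogonality step, produces exactly the two identities defining $G_2^2$. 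The estimate (3.2) then follows from the explicit formulas for $l_{u_1},\omega_{u_1}$ in Remark 3.1 by Cauchy-Schwarz, combined with $\|u_1\|_{L^2(\mathbb{R}^3)}\le \|u\|_{L^2(\mathbb{R}^3)} \le C\|u\|_{\tilde{X}}$ (the orthogonal projection contracts $L^2$, and the $L^2$- and $\tilde{X}$-norms are equivalent since $\rho$ is a positive constant).

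For part (2), I use $\mathbb{P}u|_\mathcal{O}=l+\omega\times y$ (smooth, and trivially $L^2(\mathcal{O})$-controlled via (3.2)) together with $\mathbb{P}u|_\Omega = u-\nabla Q$ on $\Omega$, where $Q=q_1|_\Omega+q_2$ solves the exterior Neumann problem
$$\Delta Q=\mathrm{div}\,u\ \text{ in }\ \Omega,\qquad \partial_n Q=(u-l-\omega\times y)\cdot \vec{n}\ \text{ on }\ \partial\Omega,$$
with $\nabla Q\in L^2(\Omega)$. For $u\in H_s$ the bulk datum lies in $H^{s-1}(\Omega)$ and the Neumann datum in $H^{s-1/2}(\partial\Omega)$, so standard elliptic regularity for the exterior Laplace-Neumann problem delivers $\nabla Q\in H^s(\Omega)$ with norm bounded by $\|u\|_{H^s(\Omega)}+|l|+|\omega|$; combined with (3.2), this produces $\mathbb{P}u\in H_s$ with the desired continuous dependence. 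The step I expect to be the main obstacle is the identification $w_d|_\Omega=\nabla q_2$ in the second paragraph, which requires the correct version of the Helmholtz decomposition on the exterior domain and a careful duality argument to extract a genuine gradient from the abstract $L^2$-orthogonality; a secondary technical point is the global $L^2(\Omega)$-control of $\nabla Q$ in the exterior Neumann step, for which one must rely on the $|x|^{-1}$ decay of the three-dimensional Newtonian potential on top of the local $H^s$ regularity near $\partial\Omega$.
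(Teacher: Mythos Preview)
Your proposal is correct, and for part~(1) it actually does more than the paper: the paper simply cites \cite{DR} for the orthogonal decomposition and for most of (\ref{3.2}), noting only that the equivalence of the $L^2$- and $\tilde{X}$-norms supplies the last inequality. Your sketch of the three pairwise orthogonalities and of the identification $\tilde{X}_*^\perp = G_1^2 \oplus G_2^2$ via the classical Helmholtz decomposition on $\mathbb{R}^3$ followed by testing against divergence-free $C_c^\infty(\Omega)$ fields and Bogovskii-type rigid extensions is the natural self-contained route and is sound.

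For part~(2) the two approaches differ in a small but useful way. You pose the exterior Neumann problem for $Q$ directly and appeal to elliptic regularity, flagging the global $L^2(\Omega)$ control of $\nabla Q$ as a point requiring care. The paper instead absorbs the rigid boundary datum into the source by introducing the explicit compactly supported divergence-free corrector
\[
\varphi=\nabla\times\Bigl[\tfrac{1}{2}\xi(y)\,(l_{u_1}\times y-\omega_{u_1}|y|^2)\Bigr],
\]
which satisfies $\varphi\cdot\vec{n}=(l_{u_1}+\omega_{u_1}\times y)\cdot\vec{n}$ on $\partial\Omega$; the Neumann system then reads $\Delta P=\mathrm{div}(u-\varphi)$, $\partial_n P=(u-\varphi)\cdot\vec{n}$, which is exactly the Helmholtz--Weyl problem for $u-\varphi$, so the estimate $\|\nabla P\|_{H^s(\Omega)}\le C\|u-\varphi\|_{H^s(\Omega)}$ follows in one stroke from Galdi's theorem. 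This trick buys you the global $L^2$ bound for free and avoids assembling the exterior Neumann theory by hand; your direct approach works too but needs precisely the decay argument you mention.
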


\begin{proof} The orthogonal decomposition in
(1) has been proved in \cite{DR}. And the estimate (\ref{3.2}) is partly derived in \cite{DR} and partly due
to the fact that $\|\cdot\|_{L^2(\mathbb{R}^3)}$ and $\|\cdot \|_{\tilde{X}}$ are equivalent norms.  Now we verify that (2) holds. For
every $u\in H_s(s\ge 0),$suppose that $u=u_1 + \nabla q_1 + \nabla q_2 = u_1 + \nabla P$ over $\Omega$,  then it suffices to prove that
$$\|\nabla P\|_{H^s(\Omega)}\leq C\|u\|_{H_s},$$
with some constant $C$ independent of $u$.

In fact, from the formula (\ref{3.1}), $P$ satisfies the following equations:
\begin{equation}\left\{\begin{aligned}
&\Delta P={\rm div}~u, \, \,& & \text{in}\,\, \Omega,\\
&\frac{\partial P }{\partial \vec{n}}=u\cdot
\vec{n}-(l_{u_1}+\omega_{u_1}\times y)\cdot \vec{n},
 \,\, && \text{on}\,\,\partial\Omega.\\
\end{aligned}\right.\label{3.3}
\end{equation}

Let \begin{equation*}
\varphi=\nabla\times\left[\frac{1}{2}\xi(y)\cdot \left(l_{u_1}\times
y-\omega_{u_1}|y|^2\right)\right],
\end{equation*}
where $\xi$ is a cut-off function defined in section 2. Clearly,
${\rm div}~\varphi=0$ in $\Omega$ and $\varphi \cdot
\vec{n}=(l_{u_1}+\omega_{u_1}\times y)\cdot \vec{n}$ on
$\partial\Omega$. Therefore, (\ref{3.3}) can be rewritten,
\begin{equation}\left\{\begin{aligned}
&\Delta P={\rm div}~(u-\varphi), \, \,& & \text{in}\,\, \Omega,\\
&\frac{\partial P }{\partial \vec{n}}=(u-\varphi)\cdot
\vec{n}, \,\, && \text{on}\,\,\partial\Omega.
\end{aligned}\right.\label{3.4}
\end{equation}

The solution to the system (\ref{3.4}) is closely related to the
Helmholtz-Weyl decomposition. As proved in \cite{G},
\begin{equation*} \begin{aligned}
\left\|\nabla P \right\|_{H^s(\Omega)}&\leq
 C\|u-\varphi\|_{H^s(\Omega)}\\
 & \leq C\left(\|u\|_{H^s(\Omega)}+ \|\varphi\|_{H^s(\Omega)}\right)\\
 & \leq C\left(\|u\|_{H^s(\Omega)} + |l_{u_1}|+ |\omega_{u_1}|\right)\\
 & \leq C\|u\|_{H_s},\end{aligned}\label{3.5}
\end{equation*}
which completes the proof of Lemma 3.3.
\end{proof}

The following lemma is to give the bounds of the terms which
appear in the system (\ref{2.16})-(\ref{2.22}). Before that we'd like to give another description of
relationship between $(l, \omega)$ and $(L, R)$, which is different from (\ref{2.14})-(\ref{2.14-add}).
Suppose $(L, R)$ is given, we want to determine $(l, \omega)$ and
then define the other coefficients in (\ref{2.16})-(\ref{2.22}).

Since $$\frac{dQ(t)}{dt}= A(\omega(t))Q(t), $$ multiplying by $Q^T(t)$, then
$$-\frac{dQ^T(t)}{dt}Q(t) = A(R(t)).$$
It gives that
$$\left\{\begin{array}{l}\frac{dQ(t)}{dt}= Q(t) A(R(t)),
 \\
Q(t)=Id.
  \end{array}\right.
$$
Now we see that if $R(t)$ is given, $Q(t)$ can be determined. Then $l(t), \omega(t)$ are determined,
\begin{equation}\label{3.5-add} l(t) = Q(t)L(t),\ \ \ A(\omega(t)) = Q(t) A(R(t))Q^T(t).
\end{equation}
Then $\Lambda, X, Y, g_{ij}, g^{ij}, \Gamma, Mv, Nv$ can be determined as in section 2.

Moreover, if $L(t),\ R(t)\in C[0, T]$, then $l(t), \ \omega(t)\in C[0,T]$, with the estimate
\begin{equation}\label{3.5-add-1}|l(t)| + |\omega(t)|\leq C(T)\left(\|L\|_{L^\infty(0,T)}+ \|R\|_{L^\infty(0, T)}\right).
 \end{equation}
Suppose that $(l^1(t), \omega^1(t))$ and $(l^2(t), \omega^2(t))$ are determined
by $(L^1(t), R^1(t))$ and $(L^2(t), R^2(t))$ respectively
in the above way, then
\begin{equation}\label{3.5-add-2}\begin{array}{ll}
&  \|l_1 -l_2\|_{L^\infty(0, T)}+\|\omega_1 -\omega_2\|_{L^\infty(0, T)}\\
\leq  & C(T)(1+ \|R_1(t)\|_{L^\infty(0, T)}+
\|R_2(t)\|_{L^\infty(0, T)})\cdot (\|L_1 - L_2\|_{L^\infty(0, T)}
+\|R_1 -R_2\|_{L^\infty(0, T)}).
\end{array}\end{equation}

\begin{lem}Assume that $v$ is a function in $L^\infty (0, T; \tilde{X}_*)$ and $s$ is a nonnegative integer.
 Suppose there exists $M_*>0$,  such that
 $\|v\|_{L^{\infty}(0,T; \tilde{X})}\le M_*$. Let
$$L(t) = l_{v(t)},\ \ \ \ \ R(t)=\omega_{v(t)}.$$
Suppose $l(t), \omega(t)$ is given by $L(t), R(t)$ as in (\ref{3.5-add}) and
 $\Lambda, X, Y, g_{ij}, g^{ij}, \Gamma$ are defined as
  in section 2.  Then for every $t\in [0, T]$, the following estimates
 hold:
 \begin{alignat}{12}
 &\|J_X(\cdot,t)\|_{W^{s,\infty}(\mathbb{R}^3)}\le C(s,T,M_*),\,\,
 \|J_Y(X(\cdot,t),t)\|_{W^{s,\infty}(\mathbb{R}^3)}\le C(s, T,M_*),\label{3.6}\\
 &\|\Lambda(X(\cdot,t),t)\|_{W^{s,\infty}(\mathbb{R}^3)}\le C(s, T,M_*),\,\,
 \|g_{ij}(\cdot,t)\|_{W^{s,\infty}(\mathbb{R}^3)}\le C(s, T,M_*),\label{3.7}\\
 &  \|g^{ij}(\cdot ,t)\|_{W^{s,\infty}(\mathbb{R}^3)}\le C(s, T,M_*),
 \,\, \|G^{-1}(\cdot, t)\|_{W^{s, \infty}(\mathbb{R}^3)}\leq C(s, T,
 M_*),\label{3.8-add}
 \\ & \|\Gamma(\cdot,t)\|_{W^{s,\infty}(\mathbb{R}^3)}\le
 C(s, T,M_*),\label{3.8}
 \end{alignat}
 where $G^{-1}$ is the inverse of $G$.
 \end{lem}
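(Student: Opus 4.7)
\medskip
\noindent\textbf{Proof proposal.} The plan is to reduce every estimate to an $L^\infty$ bound on $(l(t),\omega(t))$ combined with the ODE structure of $X$ in \eqref{2.11}, and then propagate $W^{s,\infty}$ control by differentiating and applying Gronwall inductively.

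First I would note that because $v(t)\in \tilde X_*$ with $\|v(t)\|_{\tilde X}\le M_*$, the decomposition bound \eqref{3.2} of Lemma~3.3 gives $|L(t)|+|R(t)|\le CM_*$ for every $t\in[0,T]$. Feeding this into \eqref{3.5-add-1} yields $|l(t)|+|\omega(t)|\le C(T,M_*)$. Since $\psi(x,t)=\xi(Q(t)^T(x-h(t)))$ and $Q(t)$ is a rotation (with $|h(t)|\le T|l|_\infty$), item (4) of Lemma~2.2 then gives the first half of \eqref{3.7}, namely $\|\Lambda(\cdot,t)\|_{W^{s,\infty}}\le C(s,T,M_*)$. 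This also supplies the bound on $\|\Lambda(X(\cdot,t),t)\|_{W^{s,\infty}}$ once the $J_X$ estimate is in hand, via the chain rule.

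The heart of the proof is the estimate \eqref{3.6} on $J_X$. Differentiating \eqref{2.11} in $y$ gives
\begin{equation*}
\partial_t J_X(y,t)=(\nabla_x\Lambda)(X(y,t),t)\,J_X(y,t),\qquad J_X(y,0)=Id.
\end{equation*}
Since $\|\nabla\Lambda\|_{L^\infty}\le C(T,M_*)$, Gronwall's inequality bounds $\|J_X(\cdot,t)\|_{L^\infty}$ by $C(T,M_*)$. Higher derivatives $\partial_y^\alpha J_X$ satisfy a linear ODE of the form $\partial_t(\partial_y^\alpha J_X)=(\nabla\Lambda)(X,t)\,\partial_y^\alpha J_X+\mathcal R_\alpha$, where $\mathcal R_\alpha$ is a polynomial in lower-order derivatives of $X$ with coefficients that are spatial derivatives of $\Lambda$ evaluated at $X$. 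By induction on $|\alpha|\le s$ and another Gronwall, one obtains $\|J_X(\cdot,t)\|_{W^{s,\infty}}\le C(s,T,M_*)$. The estimate on $J_Y(X(\cdot,t),t)$ then follows from $J_Y(X(y,t),t)=J_X(y,t)^{-1}$, together with $\det J_X=1$ from \eqref{2.13}, so that the inverse is a polynomial in the entries of $J_X$ (cofactor formula); the $W^{s,\infty}$ bound is inherited by the Moser-type algebra estimate for products of $W^{s,\infty}$ functions.

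The remaining bounds are now straightforward corollaries. For $g_{ij}$ and $g^{ij}$, formula \eqref{2.15} expresses them as products of entries of $J_X$, respectively $J_Y\circ X$, and the $W^{s,\infty}$ algebra yields \eqref{3.7}--\eqref{3.8-add}. Because $\det G\equiv 1$ (again from \eqref{2.13}), $G^{-1}$ is a polynomial in the entries of $G$, hence the same algebra gives the bound on $G^{-1}$. Finally the Christoffel symbols $\Gamma^k_{ij}=\tfrac12\sum_l g^{kl}(\partial_j g_{il}+\partial_i g_{jl}-\partial_l g_{ij})$ are products of $g^{kl}$ and first derivatives of $g_{ij}$, so \eqref{3.8} follows from the already-established bounds at order $s$ and $s+1$ on $g_{ij}$ (obtained by taking one extra derivative in the previous induction). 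I expect the main obstacle to be purely bookkeeping: tracking the combinatorics of high derivatives of the composition $\Lambda(X(y,t),t)$ in the Gronwall induction, which is handled cleanly by Fa\`a di Bruno together with the Moser algebra estimate in $W^{s,\infty}$.
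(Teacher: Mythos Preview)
Your proposal is correct and follows essentially the same route as the paper: differentiate the flow ODE \eqref{2.11} to obtain a linear ODE for $J_X$, bound it by Gronwall, pass to higher derivatives by induction, recover $J_Y\circ X$ from $\det J_X=1$ via cofactors, and then read off $g_{ij}$, $g^{ij}$, $G^{-1}$, $\Gamma$ as algebraic combinations. Your write-up is in fact more explicit than the paper's in spelling out the preliminary chain $\|v\|_{\tilde X}\to |L|+|R|\to |l|+|\omega|\to \|\Lambda\|_{W^{s,\infty}}$ via \eqref{3.2}, \eqref{3.5-add-1}, and Lemma~2.2(4), and in noting that the $\Gamma$ bound requires one extra order on $g_{ij}$.
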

 \begin{proof}
For every $j=1, 2$ or $3$, let $z(y,t) = \frac{\partial X}{\p y_j},$
 \begin{equation*}
\left\{\begin{aligned}
&\frac{\p z(y,t)}{\p t}=\frac{\p\Lambda}{\p x}(X(y,t), t)\cdot z(y,t),\\
&z(y, 0)=\vec{e}_j ,
\end{aligned}\right.
\end{equation*}
where $\vec{e}_j$ is the $j$-th vector of the basis of
$\mathbb{R}^3.$  Then
\begin{equation}
z(y,t)=\vec{e}_j+\int_0^t\frac{\p\Lambda}{\p x}(X(y, t), t)\cdot
z(y,s)\dif s.\label{3.9}
\end{equation}
It follows from Gronwall's inequality that $|z(y,t)|\leq C(T,M_*)$.

Since $\det (J_X)=1$, then $ J_Y=(J_X^{ij})$, where $J_X^{ij}$ is
the cofactor of $J_X$. Hence $$|J_Y(X(\cdot,t),t)|\le C(T, M_*).$$
Furthermore,
$$|G(\cdot,t)|\le
C(T,M_*),\ \ \ |G^{-1}(\cdot,t)|\le C(T,M_*).$$

Denote $D^\beta=\frac{\p^\beta}{\p y^\beta}$. From (\ref{3.9}), one
can deduce that
$$D^\alpha z=\int_0^t \sum_{\beta \le \alpha}{\alpha \choose\beta} D^\beta\left(\frac{\p\Lambda}
{\p x}\right)D^{\alpha-\beta}z \dif s,\,\, \ |\alpha|\leq s.$$
Following the preceding process, one can get the estimates
(\ref{3.6})-(\ref{3.8}).
 \end{proof}

Next lemma is about the Lipschitz continuity of the coefficients
with respect to $v$.
\begin{lem} Assume that the assumptions of
Lemma 3.4 hold for $v^i,\,\, i=1,2.$ Let $L^i(t) = l_{v^i(t)}$ and $R^i(t)= \omega_{v^i(t)}.$ Define
$Q^i(t), l^i(t), \omega^i(t)$ and other terms correspondingly.
Let $L(t)=L^1(t)- L^2(t),\ \ R(t)=
R^1(t)-R^2(t), \,\, X=X^1-X^2,\,\,
\Lambda(y,t)= \Lambda(X^1(y, t), t) - \Lambda(X^2(y, t), t), \,\,
G=(g^{ij})=(g^{ij,1}-g^{ij,2}),\,\,g_{ij}=g_{ij}^1-g_{ij}^2, $\,\
$G^{-1} = (G^1)^{-1} - (G^2)^{-1}$, and\,\,
$\Gamma^j_{m,k}=\Gamma^{j,1}_{m,k}-\Gamma^{j,2}_{m,k}.$ Then for
every $t\in [0,T]$,
\begin{equation}\label{3.10}
\|X(\cdot,  t)\|_{W^{s, \infty}(\mathbb{R}^3)}\leq C(s, T,
M_*)\left(\|L\|_{L^\infty(0, T)} + \|R\|_{L^\infty(0,
T)}\right),
\end{equation}
\begin{equation}
\label{3.11} \|\Lambda(\cdot, t)\|_{W^{s, \infty}(\mathbb{R}^3)}\leq
C(s, T, M_*) \left(\|L\|_{L^\infty(0, T)} + \|R\|_{L^\infty(0,
T)}\right),
\end{equation}
\begin{equation}
\label{3.12} \|g_{ij}(\cdot, t)\|_{W^{s, \infty}(\mathbb{R}^3)}\leq
C(s, T, M_*)\left(\|L\|_{L^\infty(0, T)} + \|R\|_{L^\infty(0,
T)}\right),
\end{equation}
\begin{equation}\label{3.13}
\|g^{ij}(\cdot, t)\|_{W^{s, \infty}(\mathbb{R}^3)} \leq C(s, T, M_*)
\left(\|L\|_{L^\infty(0, T)} + \|R\|_{L^\infty(0, T)}\right),
\end{equation}
\begin{equation}\label{3.14}
\|G^{-1}(\cdot, t)\|_{W^{s, \infty}(\mathbb{R}^3)}\leq C(s, T,
M_*)\left(\|L\|_{L^\infty(0, T)} + \|R\|_{L^\infty(0,
T)}\right),
\end{equation}
\begin{equation}\label{3.15}
\|\Gamma(\cdot, t)\|_{W^{s, \infty}(\mathbb{R}^3)}\leq C(s, T,
M_*)\left(\|L\|_{L^\infty(0, T)} + \|R\|_{L^\infty(0,
T)}\right).
\end{equation}
\end{lem}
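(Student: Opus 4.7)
The plan is to establish the six estimates in the order they are listed, treating each as a consequence of the previous ones and the uniform bounds already supplied by Lemma 3.4.

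First I would reduce everything to the difference of the rigid-body data $(l,\omega)$. Applying (\ref{3.5-add-2}) together with the $M_\ast$-bound on $\|L^i\|_{L^\infty}+\|R^i\|_{L^\infty}$ (which follows from (\ref{3.2}) applied to $v^i$) gives
$$\|l^1-l^2\|_{L^\infty(0,T)}+\|\omega^1-\omega^2\|_{L^\infty(0,T)}\le C(T,M_\ast)\left(\|L\|_{L^\infty(0,T)}+\|R\|_{L^\infty(0,T)}\right).$$
Since $h^i(t)=\int_0^t l^i(s)\,ds$ and $Q^i$ is the solution of (\ref{1.9}) for $\omega^i$, Gronwall applied to the ODEs satisfied by $h^1-h^2$ and $Q^1-Q^2$ yields the same bound, in any $W^{s,\infty}$ norm, for these quantities. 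Because $\Lambda=\psi V+\nabla\psi\times W$ is polynomial in $(l,\omega,x-h)$ with coefficients depending on $\xi(Q^T(x-h))$, one then obtains
$$\|(\Lambda^1-\Lambda^2)(\cdot,t)\|_{W^{s,\infty}(\mathbb R^3)}\le C(s,T,M_\ast)\left(\|L\|_{L^\infty(0,T)}+\|R\|_{L^\infty(0,T)}\right).$$

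Next I would attack (\ref{3.10}). Subtracting the ODEs (\ref{2.11}) for $X^1$ and $X^2$ gives
$$\partial_t(X^1-X^2)=[\Lambda^1(X^1,t)-\Lambda^1(X^2,t)]+[\Lambda^1(X^2,t)-\Lambda^2(X^2,t)],$$
where the first bracket is bounded by $\|\nabla_x\Lambda^1\|_{L^\infty}|X^1-X^2|\le C(T,M_\ast)|X^1-X^2|$ by Lemma 3.4, and the second bracket by the previous step. Gronwall gives the $L^\infty$ version of (\ref{3.10}). For higher derivatives, I differentiate (\ref{2.11}) $|\alpha|$ times to obtain a linear ODE for $D^\alpha(X^1-X^2)$ whose source term, by Faà di Bruno combined with the uniform bounds of Lemma 3.4 and the already-controlled lower-order differences, is bounded by the desired right-hand side. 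Successive applications of Gronwall at each level $|\alpha|\le s$ give (\ref{3.10}) in full. The estimate (\ref{3.11}) then follows by splitting
$$\Lambda^1(X^1,t)-\Lambda^2(X^2,t)=[\Lambda^1(X^1,t)-\Lambda^1(X^2,t)]+[\Lambda^1(X^2,t)-\Lambda^2(X^2,t)],$$
and using Moser's composition estimate with (\ref{3.10}) on the first piece and the bound on $\Lambda^1-\Lambda^2$ on the second.

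The remaining bounds are algebraic consequences. Since $(g_{ij})=J_X^T J_X$, Moser's product inequality applied to (\ref{3.10}) differentiated once yields (\ref{3.12}). Because $\det J_X=1$, the matrix $(g^{ij})$ is the cofactor/inverse of $(g_{ij})$, so the identity
$$G_{\mathrm{up}}^1-G_{\mathrm{up}}^2=-G_{\mathrm{up}}^1\bigl(G_{\mathrm{lo}}^1-G_{\mathrm{lo}}^2\bigr)G_{\mathrm{up}}^2$$
(where $G_{\mathrm{lo}}=(g_{ij})$ and $G_{\mathrm{up}}=(g^{ij})$), together with the uniform Lemma 3.4 bounds on both sides, gives (\ref{3.13}); the analogous identity for $G^{-1}$ gives (\ref{3.14}). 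Finally (\ref{3.15}) is immediate from (\ref{2.15}) because $\Gamma$ is a polynomial expression in $g^{ij}$ and first derivatives of $g_{ij}$, to which the Moser estimates apply. The main technical obstacle I anticipate is the bookkeeping in Step 2: controlling the full $W^{s,\infty}$ norm of $X^1-X^2$ requires a careful induction on $|\alpha|$ through the differentiated ODE, where each step uses Faà di Bruno to expand $D^\alpha[\Lambda^i(X^i,\cdot)]$ and leans on the uniform bounds from Lemma 3.4 to close the Gronwall argument.
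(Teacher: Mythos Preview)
Your proposal is correct and follows essentially the same route as the paper: subtract the flow ODEs, split $\Lambda^1(X^1,t)-\Lambda^2(X^2,t)$ into a Lipschitz-in-$x$ piece and a data-difference piece, close with Gronwall for (\ref{3.10}), and then read off (\ref{3.11})--(\ref{3.15}) algebraically. The paper's own proof is extremely terse (it carries out only the $L^\infty$ case explicitly and dismisses the rest with ``other estimates can be derived similarly''), so your outline is in fact considerably more detailed than the original while remaining the same argument in substance.
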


\begin{proof}
Since $$\left\{\begin{array}{l} \frac{\partial X(y, t)}{\partial t}=
\Lambda(X^1(y,t), t)- \Lambda(X^2(y, t), t),\\[2mm]
X(y, 0) = 0,
\end{array}\right.$$
Simple calculation and the estimates in Lemma 3.4 induce that
$$\begin{array}{ll}\left|\Lambda(X^1(y,t), t) - \Lambda(X^2(y, t), t)\right| &
\leq C(T, M_*) |X^1(y,t) - X^2(y, t)| \\[2mm]
& \ \ \ \  + C(T, M_*)(|L(t)| + |R(t)|).
\end{array}$$
Therefore,
$$\|X\|_{C([0, T]; L^\infty(\mathbb{R}^3))}\leq
C(T, M_*) \left(\|L\|_{L^\infty(0, T)} + \|R\|_{L^\infty(0,
T)}\right).$$

$$\begin{array}{ll} \|\Lambda(y, t)\|_{L^\infty(\mathbb{R}^3)}
& \leq C(T, M_*) \|X^1(y, t)- X^2(y,
t)\|_{L^\infty(\mathbb{R}^3)}\\[2mm]
& \leq C(T, M_*)\left(\|L\|_{L^\infty(0, T)} +
\|R\|_{L^\infty(0, T)}\right). \end{array}$$

Other estimates can be derived similarly.
\end{proof}

\section{A Priori $H^s$-estimates of $\nabla q$}
In the following text, $s>\frac52$. Given a function $v \in
C_w(0, T; H_s\cap \tilde{X}_*)$, which satisfies that
$\|v\|_{L^\infty(0, T; H_{s})}\leq M_0.$ Suppose $v$ is a solution to (\ref{2.16})-(\ref{2.23}), taking the divergence of Eq. (\ref{2.16}), then
$q$ satisfies the following system,
\begin{equation}\left\{\begin{aligned}
&{\rm div}\left(\sum_{j=1}^{3}g^{ij}\frac{\partial q}{\partial
y_j}\right)
=-\di (M v+N v), \, \,& & \text{in}\,\, \Omega,\\
&\sum_{i,j=1}^3g^{ij}\frac{\partial q }{\partial
y_j}n_i+\left(\frac{1}{m} \int_{\partial\Omega} q
\vec{n}\dif\sigma\right)\cdot \vec{n}
+\left(\bar{J}^{-1}\int_{\partial\Omega}y\times
q\vec{n}\dif\sigma
\right)\times y\cdot \vec{n}\\
&=-(Mv+Nv)\cdot \vec{n}+(\omega_v\times l_v)\cdot \vec{n}
-\left[\bar{J}^{-1}(\bar{J}\omega_v\times\omega_v)\right]\times
y\cdot \vec{n}, \,\, && \text{on}\,\,\partial\Omega.
\end{aligned}\right.\label{4.1}
\end{equation}
Here $Q$, $l$, $\omega$, $g^{ij}$, $Mv$ and $N v$ are given as in (\ref{3.5-add}), (\ref{2.14})-(\ref{2.15}) and (\ref{2.23}), replacing
$L, R$ by $l_v, \omega_v$.

In this section, we will give the $H^s$-estimates of $\nabla q$, for every fixed time $t\in [0, T]$. For simplicity of writing, we omit $t$. Here is the main result of
this section.

\begin{prop}$$\|\nabla q\|_{H^s(\Omega)} \leq C(T, M_0) (1+ \|v\|_{H_s}), $$
with some constant $C$ depending on $T$ and $M_0$.
\end{prop}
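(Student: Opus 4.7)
Equation (\ref{4.1}) is a variable-coefficient elliptic Neumann problem for $q$ with a nonlocal boundary condition: the right-hand side involves the constant vectors $a[q]:=\tfrac{1}{m}\int_{\partial\Omega}q\vec{n}\,\dif\sigma$ and $b[q]:=\bar{J}^{-1}\int_{\partial\Omega}y\times q\vec{n}\,\dif\sigma$, both linear in the trace $q|_{\partial\Omega}$.  I plan to apply $H^s$-elliptic Neumann regularity (in the spirit of Galdi \cite{G}) to the principal part $q\mapsto\di(G\nabla q)$, relying on Lemma 3.4 to bound the coefficients $g^{ij}$ in $W^{s,\infty}$ by $C(T,M_0)$, and then close the estimate by absorbing the contributions of $a[q]$ and $b[q]$ into the left-hand side.

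\textbf{Source and boundary data.}  Although $Mv$ and $Nv$ individually lie only in $H^{s-1}(\Omega)$, the key observation is that $\di(Mv+Nv)\in H^{s-1}(\Omega)$ thanks to the cancellation $v_j\p_j\p_iv_i=v_j\p_j(\di\,v)=0$ (and its analogue inside $\di(Mv)$). After this cancellation the remaining terms are products of $W^{s,\infty}$-coefficients (controlled by Lemma 3.4) with first derivatives of $v$ in $H^{s-1}$, together with quadratic terms such as $\p_iv_j\,\p_jv_i$ that lie in $H^{s-1}$ by the algebra property (valid since $s-1>3/2$). Using $\|v\|_{H_s}\le M_0$ to absorb a factor, I obtain $\|\di(Mv+Nv)\|_{H^{s-1}}\le C(T,M_0)(1+\|v\|_{H_s})$. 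Writing the boundary condition as $(G\nabla q)\cdot\vec{n}=g_v\cdot\vec{n}-a[q]\cdot\vec{n}-(b[q]\times y)\cdot\vec{n}$ with $g_v:=-(Mv+Nv)+\omega_v\times l_v-[\bar{J}^{-1}(\bar{J}\omega_v\times\omega_v)]\times y$, I similarly bound $\|g_v\cdot\vec{n}\|_{H^{s-1/2}(\partial\Omega)}\le C(T,M_0)(1+\|v\|_{H_s})$ via the trace theorem, Lemma 3.4, and the bound $|l_v|+|\omega_v|\le C\|v\|_{\tilde{X}}$ from Remark 3.1.

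\textbf{Closing the estimate and main obstacle.}  The elliptic regularity inequality yields
\[
\|\nabla q\|_{H^s(\Omega)}\le C(T,M_0)\bigl(\|\di(Mv+Nv)\|_{H^{s-1}}+\|g_v\cdot\vec{n}\|_{H^{s-1/2}(\partial\Omega)}+|a[q]|+|b[q]|\bigr).
\]
Normalising $q$ by $\int_\Omega q\,\dif y=0$, Poincar\'e gives $\|q\|_{H^1}\le C\|\nabla q\|_{L^2}$, hence $|a[q]|+|b[q]|\le C\|\nabla q\|_{L^2(\Omega)}$. This low-order term is absorbed into the left-hand side by a Fredholm/compactness argument: the nonlocal contribution is a compact perturbation of the local Neumann operator, and the homogeneous coupled problem admits only the trivial solution; the solvability backbone is Lemma 3.3's orthogonal decomposition $L^2(\mathbb{R}^3)=\tilde{X}_*\oplus G_1^2\oplus G_2^2$, which identifies $\nabla q$ as the natural ``pressure'' component compatible with rigid-body motion. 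The main obstacle is precisely this absorption step: the nonlocal couplings $a[q]$, $b[q]$ must be disposed of without losing any power of $\|v\|_{H_s}$ or spoiling the regularity count, and doing so cleanly requires exploiting the full structural content of Lemma 3.3 rather than treating the boundary integrals as mere perturbations.
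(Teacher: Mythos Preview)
Your boundary-data estimate has a genuine gap. You claim $\|g_v\cdot\vec{n}\|_{H^{s-1/2}(\partial\Omega)}\le C(T,M_0)(1+\|v\|_{H_s})$ ``via the trace theorem,'' but the convection piece $\bigl(\tfrac{\partial Y}{\partial t}+v\bigr)\cdot\nabla v$ inside $Mv+Nv$ lies only in $H^{s-1}(\Omega)$ (since $\nabla v\in H^{s-1}$ and the algebra property gives no gain), so its trace is at best in $H^{s-3/2}(\partial\Omega)$ --- one full derivative short. With only $H^{s-3/2}$ Neumann data, elliptic regularity yields $\nabla q\in H^{s-1}$, not $H^s$, and the proposition fails. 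The paper closes this gap by the Bourguignon--Brezis trick: setting $w:=\tfrac{\partial Y}{\partial t}+v$, one has $w\cdot\vec{n}=0$ on $\partial\Omega$, and tangentially differentiating this identity gives $(w\cdot\nabla w)\cdot\vec{n}=-\sum_{i,j}w_iw_j\,\partial_in_j$ on $\partial\Omega$; hence
\[
\Bigl(\tfrac{\partial Y}{\partial t}+v\Bigr)\cdot\nabla v\cdot\vec{n}
=-\sum_{i,j}w_iw_j\,\partial_in_j-\Bigl(\tfrac{\partial Y}{\partial t}+v\Bigr)\cdot\nabla\Bigl(\tfrac{\partial Y}{\partial t}\Bigr)\cdot\vec{n}.
\]
Both right-hand terms involve at most $v$ itself (no $\nabla v$) paired with smooth or $W^{s,\infty}$-bounded factors, so their traces genuinely lie in $H^{s-1/2}(\partial\Omega)$ with the desired bound. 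This rewriting is the crux of the boundary estimate and is missing from your sketch.

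A secondary issue: normalising $q$ by $\int_\Omega q=0$ and invoking Poincar\'e is not legitimate on the exterior domain $\Omega$, and the Fredholm absorption you propose for $a[q],b[q]$ is more delicate than suggested. The paper's route is cleaner: it builds the nonlocal boundary terms directly into a coercive bilinear form on $\dot D^{1,2}(\Omega)$ (they contribute nonnegative quadratic pieces, since $\bar J$ is positive definite), applies Lax--Milgram to get $\|\nabla q\|_{L^2}\le C(T,M_0)$ outright, and then treats $a[q],b[q]$ as known bounded data in the subsequent high-order Neumann regularity bootstrap (carried out via localisation near and away from $\partial\Omega$). No compactness argument is needed.
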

\begin{rem}
The system $q$ satisfies is almost a classical elliptic problem with Neumann boundary condition. During the proof of Proposition 4.1, the main idea is to use the Lax-Milgram theorem
to get the existence of weak solution, and the standard high-order regularity estimates for exterior elliptic problems.
\end{rem}

\begin{proof}For every fixed $t\in [0, T]$, the matrix  $ G=
(g^{ij})=J_{Y}J_{Y}^T$, so $G $ is positive definite. Denote
$\lambda_i(y,t)>0,(i=1,2,3)$  the eigenvalues of the matrix
$(g^{ij})$. Since $\det(g^{ij})=1$,
  thus $\prod_{i=1}^{3}\lambda_i=1$ and $\sum\limits_{i=1}^3 \lambda_i=\sum\limits_{i=1}^3 g^{ii}>0$.
  Let $\gamma_0=\sup\limits_{y\in\mathbb{R}^3}|g^{ii}|$,
  then we have $3\gamma_0\ge\lambda_i\ge\frac{1}{(3\gamma_0)^{2}}$ for every
  $i=1,2,3.$ By virtue of Lemma 3.4, there exist constants $C_1(T, M_0)$ and
  $C_2(T, M_0)$,
  $$C_1(T, M_0) \leq |\lambda_i| \leq C_2(T, M_0), \ \ \ i=1,2,3.$$

  Next, we shall use the Lax-Milgram theorem to prove the existence of the solution of (\ref{4.1}),
  and then give the $H^s$-estimate of this solution.

 Set a bilinear form $B$ and a linear functional $F$ on $\dot{D}^{1,2}(\Omega)$ as
 follows, for every $\eta,q\in \dot{D}^{1,2}(\Omega),$
\begin{equation*}\begin{aligned}
  B(q,\eta)&=\sum_{i=1}^3\left(\sum_{j=1}^3 g^{ij}\frac{\partial q}{\partial y_j},
  \frac{\partial \eta}{\partial y_i}\right)_{L^2(\Omega)}+\frac{1}{m}
  \left(\int_{\partial\Omega} q \vec{n}\dif\sigma
  \right)\cdot \left(\int_{\partial\Omega} \xi \vec{n}\dif\sigma\right)\\
  &\ \ + \left(\bar{J}^{-1}\int_{\partial\Omega}y\times q\vec{n}
  \dif\sigma\right)\cdot \left(\int_{\partial\Omega}y\times \eta \vec{n}\dif\sigma\right),\\
  F(\eta)&=-\int_{\Omega}(Mv+Nv)\cdot\nabla\eta dy+ \int_{\partial\Omega}
  (\omega_v\times l_v)\cdot\eta \vec{n}
  \dif\sigma\\
  &\ \ -\int_{\partial\Omega}\left[\bar{J}^{-1}(\bar{J}\omega_v
  \times\omega_v)\right]\times y\cdot \eta \vec{n}\dif\sigma.
  \end{aligned}
\end{equation*}

Note that
\begin{equation}\begin{aligned}
  B(q,q)&=\sum_{i=1}^3\left(\sum_{j=1}^3 g^{ij}\frac{\partial q}{\partial y_j},\frac{\partial q}
  {\partial y_i}\right)_{L^2(\Omega)}+\frac{1}{m}\left(\int_{\partial\Omega} q \vec{n}\dif\sigma\right)^2
  +\bar{J}w(q)\cdot w(q)\\
  &\ge C_1(T, M_0)\|\nabla q\|^2_{L^2(\Omega)}+ \frac{1}{m}\left(\int_{\partial\Omega} q \vec{n}\dif\sigma\right)^2
  +\bar{J}w(q)\cdot
  w(q),
    \end{aligned}\label{4.2}
\end{equation}
where $w(q)=\bar{J}^{-1}\int_{\partial\Omega}y\times q \vec{n}\dif\sigma$.

Since $\bar{J}$ is a positive definite matrix, then there exists some
constant $a>0$ such that
$$a^{-1}|w(q)|^2\le\bar{J}w(q)\cdot w(q)\leq a|w(q)|^2.$$
Combining the above inequality and (\ref{4.2}), one gets that $B$ is
coercive.

On the other hand, $$\left|\sum_{i=1}^3\left(\sum_{j=1}^3
g^{ij}\frac{\partial q}{\partial y_j},\frac{\partial \eta}{\partial
y_i}\right)_{L^2(\Omega)}\right|\leq
\|G\|_{L^{\infty}(\mathbb{R}^3)}\|\nabla
q\|_{L^2(\Omega)}\|\nabla\eta\|_{L^2(\Omega)}.$$ Then along the line
of the proposition 3.3.1 in \cite{WX2}, one can easily verify that
the bilinear form $B$ is bounded.

Now we turn to the functional $F$.
\begin{equation}\begin{aligned}
  &\ \ \ \left|-\int_{\Omega}(Mv+Nv)\cdot\nabla\eta  dx\right| \\
  &\leq \|Mv+Nv\|_{L^2(\Omega)} \cdot \|\nabla\eta\|_{L^2(\Omega)}\\
  &\leq C\left( \|\Lambda\|_{W^{1,\infty}(\Omega)}+\|J_Y\|_{L^{\infty}(\Omega)}
  +\|J_{X}\|_{L^{\infty}(\Omega)}+\|v\|_{L^{\infty}(\Omega)}\right.\\&\ \ \ \ \ \ \ \left. +
  \|\Gamma\|_{L^\infty(\Omega)}\right)
\cdot\|v\|_{H^1(\Omega)}\cdot\|\nabla\eta\|_{L^2(\Omega)}
  \\ & \leq C(T, M_0)\|\nabla \eta\|_{L^2(\Omega)},
  \end{aligned}\label{4.3}
\end{equation}

Choosing some $\eta\in D^{1,2}(\Omega)$ such that
$\int_{\Omega_r}\eta dy =0$, for some large $r$.
\begin{equation}\begin{aligned}
&\ \ \ \left|\int_{\p \Omega}\left\{\omega_v\times
l_v-[\bar{J}^{-1}(\bar{J}\omega_v\times\omega_v)]\times y\right\}
\cdot\eta \vec{n} \dif\sigma \right| \\& \leq C (|\omega_v||l_v|+|\omega_v|^2)\|\eta\|_{L^2(\partial \Omega)}\\
&\le
C(\Omega)(|\omega_v||l_v|+|\omega_v|^2)\|\eta\|_{H^1(\Omega_r)}\\
&\leq C(T, M_0, \Omega) \|\nabla \eta\|_{L^2(\Omega)},
\end{aligned}\label{4.5}
\end{equation}
where the last inequality is given by the Poincar\'{e}'s inequality.

From  the above estimates, it follows that $F$ is bounded. According to
Lax-Milgram Theorem, there exists a unique
$q\in\dot{D}^{1,2}(\Omega)$ such that
$$B(q,\eta)=F(\eta),\,\, \forall\  \eta\in\dot{D}^{1,2}(\Omega)$$
with
\begin{equation}
\|\nabla q\|_{L^2(\Omega)}\le C(T, M_0).\label{4.6}
\end{equation}

Let $$L_1 = \frac{1}{m} \int_{\p \Omega} q\vec{n} d\sigma, \ \ \
w=\bar{J}^{-1}\int_{\partial \Omega} y\times q\vec{n} d\sigma.$$
Then
\begin{equation}
|L_1|\leq C\|\nabla q\|_{L^2(\Omega)}\leq C(T, M_0),\ \ \ |w|
\leq C\|\nabla q\|_{L^2(\Omega)}\leq C(T, M_0).
\label{4.7}
\end{equation}

 Now we go to the $H^s$-estimate of $\nabla q$. Similar to \cite{RR},
 the method is a standard regularity estimate for an exterior problem of elliptic equations.
Consider the Neumann system
which is equivalent to (\ref{4.1}),
\begin{equation}\left\{\begin{aligned}
&{\rm div}\left(\sum_{j=1}^{3}g^{ij}\frac{\partial q}{\partial
y_j}\right)=-\di (M v+N v),
 \, \,& & \text{in}\,\, \Omega,\\
&\sum_{i,j=1}^3g^{ij}\frac{\partial q }{\partial y_j}n_i
=-(Mv+Nv)\cdot \vec{n} - (\omega_v\times l_v) \cdot \vec{n} \\
&\ \ \ +\left[\bar{J}^{-1}(\bar{J}\omega_v\times\omega_v)\right]\times
y\cdot \vec{n}-L_1\cdot \vec{n}- (w\times y)\cdot \vec{n},
\,\, && \text{on}\,\,\partial\Omega.
\end{aligned}\right.\label{4.8}
\end{equation}

To estimate $\|\nabla q\|_{H^s(\Omega)}$, the key is to estimate the
terms $\|\di (Mv+Nv)\|_{H^{s-1}(\Omega)}$ and $\|-(Mv+Nv)\cdot
\vec{n}\|_{H^{s-\frac{1}{2}}(\p \Omega)}$.

\begin{alignat*}{12}
& \left\|\di\left(\left(\frac{\p Y}{\p t}+v\right)\cdot \nabla
v\right)\right\|_{H^{s-1}(\Omega)}
\\ = & \left\|\sum_{i,j=1}^3\frac{\p}{\p y_i} \left(\frac{\p Y_j}{\p t}+ v_j\right)\frac{\p v_i}{\p y_j}\right\|_{H^{s-1}(\Omega)}\\
\leq & \left\|\sum_{i,j,k=1}^3\frac{\p^2 Y_j}{\p t\p x_k}\frac{\p
X_k}{\p y_i}\frac{\p v_i}{\p y_j}\right\|_{H^{s-1}(\Omega)}
+\left\|\sum_{i,j=1}^3\frac{\p v_j}{\p y_i}\frac{\p v_i}{\p y_j}\right\|_{H^{s-1}(\Omega)}\\
\leq &
C\left(\left\|\Lambda\right\|_{W^{s,\infty}(\mathbb{R}^3)}\left\|J_{X}\right\|_{W^{s-1,\infty}
(\mathbb{R}^3)}\|\nabla v\|_{H^{s-1}(\Omega)}+ \|\nabla
v\|_{H^{s-1}(\Omega)}\|v\|_{H^{s_0}(\Omega)}\right),
\end{alignat*}
and
\begin{alignat*}{12}
& \left\|\sum_{i=1}^3 \frac{\p}{\p y_i
}\left.\left(\sum_{j,k=1}^3 \left\{\Gamma_{j,k}^i\frac{\p Y_k}{\p
t}+\frac{\p Y_k}{\p x_k}\frac{\p^2 X_k}{\p t\p y}\right\}v_j
+\sum_{j,k=1}^3 \Gamma_{j,k}^i v_j v_k\right) \right.\right\|_{H^{s-1}(\Omega)}\\
\leq &
C\left[\left(\|\Gamma\|_{W^{s,\infty}(\mathbb{R}^3)}\|\Lambda\|_{W^{s,\infty}(\mathbb{R}^3)}
+\|J_Y\|_{W^{s,\infty}(\mathbb{R}^3)}\|\Lambda\|_{W^{s,\infty}(\mathbb{R}^3)}\right)\cdot \left\|v\right\|_{H^{s}(\Omega)}\right.\\
&\ \ \
+\left.\|\Gamma\|_{W^{s,\infty}(\mathbb{R}^3)}\|v\|_{H^s(\Omega)}\|v\|_{H^{s}(\Omega)}\right].
\end{alignat*}

Hence,
\begin{equation}
\|\di (Mv+Nv)\|_{H^{s-1}(\Omega)}\leq C(T, M_0)\|v\|_{H_{s}}.\label{4.9}
\end{equation}

Denote $$I =\left(\sum_{j,k=1}^3\left\{\Gamma_{j,k}^i\frac{\p Y_k}{\p t}+\frac{\p
Y_k}{\p x_k}\frac{\p^2 X_k}{\p t\p
y}\right\}v_j+\sum_{j,k=1}^3\Gamma_{j,k}^i v_j v_k \right)_i.$$ Then
$$-(Mv + Nv)\cdot \vec{n} = \left(\frac{\p Y}{\p t} + v \right)\cdot \nabla v
\cdot \vec{n} + I\cdot \vec{n}.$$

Note that there is no derivative of $v$ in the term $I$, so it is easy to handle.
\begin{equation}\label{4.10}\begin{aligned}  \ \ \ \|I\cdot \vec{n}\|_{H^{s-\frac{1}{2}}(\p \Omega)}
& \le C\|I\|_{H^s(\Omega)}\\ & \le C \left[
\left(\|\Gamma\|_{W^{s,\infty}(\mathbb{R}^3)}\|\Lambda\|_{W^{s,\infty}(\mathbb{R}^3)}
+\|J_Y\|_{W^{s,\infty}(\mathbb{R}^3)}\|\Lambda\|_{W^{s,\infty}(\mathbb{R}^3)}\right)\right. \\
&\ \ \ \ \ \left. \cdot \left\|v\right\|_{H^{s}(\Omega)}+
\|\Gamma\|_{W^{s,\infty}(\mathbb{R}^3)}\left\|v\right\|_{H^{s}(\Omega)}\left\|v\right\|_{H^{s}(\Omega)}\right]\\
& \leq C(T, M_0)\|v\|_{H_{s}}.
\end{aligned}
\end{equation}

To estimate $ \left\|(\frac{\p Y}{\p t}+v)\cdot\nabla v\cdot
\vec{n}\right\|_{H^{s-\frac{1}{2}}(\p \Omega)}$, we proceed as in \cite{BB, RR}. Note $(\frac{\p Y}{\p t}+v)\cdot
\vec{n}=0$ on $\p \Omega$, then
$$\begin{array}{ll} &\displaystyle \left(\frac{\partial Y}{\partial t} + v \right)\cdot \nabla v \cdot \vec{n} \\[3mm]
=& \displaystyle\left( \frac{\partial Y}{\partial t} +v \right) \cdot \nabla \left(\frac{\partial Y}{\partial t}+v\right)\cdot \vec{n}
- \left(\frac{\partial Y}{\partial t} + v\right)\cdot \nabla \left(\frac{\partial Y}{\partial t}\right)\cdot \vec{n}\\[3mm]
= &\displaystyle - \sum_{i, j=1}^3 \left(\frac{\partial Y}{\partial t}\right)_i \left(\frac{\partial Y}{\partial t}+ v\right)_j \partial_i n_j
- \left(\frac{\partial Y}{\partial t} + v\right)\cdot \nabla \left(\frac{\partial Y}{\partial t}\right)\cdot \vec{n}.
\end{array} $$

Hence, \begin{equation} \begin{aligned}\left\|\left(\frac{\p Y}{\p t}+v\right)\cdot\nabla v\cdot
\vec{n}\right\|_{H^{s-\frac{1}{2}}(\p \Omega)} \le
C\left(\|\Lambda\|^2_{W^{s,\infty}(\mathbb{R}^3)}+1\right)\left(1+\|v\|_{H^{s}(\Omega)}\right).\end{aligned}
\label{4.11}
\end{equation}

Combining (\ref{4.10}) and (\ref{4.11}), one has
\begin{equation}
\|-(Mv+Nv)\cdot \vec{n}\|_{H^{s-\frac{1}{2}}(\p \Omega)}\leq C(T,
M_0)(1+\|v\|_{H_{s}}).\label{4.12}
\end{equation}

The other terms can be estimated as follows:
\begin{equation}
\|\left[\bar{J}^{-1}(\bar{J}\omega\times\omega)\right]\times y\cdot
\vec{n}\|_{H^{s-\frac{1}{2}}(\p\Omega)}\leq
 C(\Omega,R)|\omega|^2 \leq C(T, M_0), \label{4.13}\end{equation}

\begin{equation}\|L_1\cdot \vec{n}\|_{H^{s-\frac{1}{2}}(\p\Omega)}\le C
|L_1|\leq C(T, M_0), \label{4.14}
\end{equation}

\begin{equation} \|(\omega_v\times l_v)\cdot \vec{n}\|_{H^{s-\frac{1}{2}}(\p\Omega)}\leq
C(\Omega,R) |\omega||l| \leq C(T, M_0),\label{4.15}\end{equation}

\begin{equation}\|(w\times y)\cdot
\vec{n}\|_{H^{s-\frac{1}{2}}(\p\Omega)}\le C(\Omega,R)|w| \leq
C(T, M_0).\label{4.16}
\end{equation}

Choose some $r>0$ such that ${\rm supp}(\xi)\subset
B_{\frac{r}{2}}$, and a cut-off function $\xi_1$,
\begin{equation*}\begin{aligned}
\xi_1(y)=\begin{cases} 1, & \ \mbox{if}\ |y|\le 2r,\\0, &\
\mbox{if}\ |y|\ge 3r.
\end {cases}
\end{aligned}
\end{equation*}
Hence, $p_1=\xi_1q$ solves the following equation
\begin{equation}\left\{\begin{aligned}
{\rm div}\left(\sum_{j=1}^{3}g^{ij}\frac{\partial p_1}{\partial y_j}\right)&=-\xi_1\di (M v+N v)+\sum_{i,j=1}^3\frac{\p g^{ij}}{\p y_i}\frac{\p \xi_1}{\p y_j}q\\
& +\sum_{i,j=1}^3g^{ij}\left(\frac{\p\xi_1}{\p y_j}\frac{\p q}{\p y_i}+\frac{\p\xi_1}{\p y_i}\frac{\p q}{\p y_j}
+ \frac{\partial^2 \xi_1}{\partial y_i \partial y_j}q\right), \, \,& & \text{in}\,\, B_{4r}\setminus\mathcal{O},\\
\sum_{i,j=1}^3g^{ij}\frac{\partial p_1 }{\partial y_j}n_i
&=-(Mv+Nv)\cdot \vec{n}-\omega_v\times l_v\cdot \vec{n} \\
&+\left[\bar{J}^{-1}(\bar{J}\omega_v\times\omega_v)\right]
\times y\cdot \vec{n}-L_1\cdot \vec{n}-w\times y\cdot \vec{n}, \,\, && \text{on}\,\,\partial\mathcal{O},\\
\sum_{i,j=1}^3g^{ij}\frac{\partial p_1 }{\partial y_j}n_i &=0, \,\,
&& \text{on}\,\, \p B_{4r}.
\end{aligned}\right.\label{4.17}
\end{equation}

By virtue of the regularity theory for elliptic equations\cite{So}, for any $\beta\geq 1$,
\begin{equation}
\begin{aligned}
&\| p_1\|_{H^{\beta+1}(B_{4r}\setminus\mathcal{O})}\leq
h_1\left(\|G\|_{W^{s,\infty}(\mathbb{R}^3)}, (3\gamma_0)^2\right)
\left(\left\|\sum_{i,j=1}^3\frac{\p g^{ij}}{\p y_i}\frac{\p
\xi_1}{\p y_j}q
\right\|_{H^{\beta-1}(B_{4r}\setminus\mathcal{O})}\right.\\ &+\|\xi_1\di
(M v+N v)\|_{H^{\beta-1}(B_{4r}\setminus\mathcal{O})}+
\left\|\sum_{i,j=1}^3g^{ij} \left(\frac{\p\xi_1}{\p y_j}\frac{\p
q}{\p y_i}+\frac{\p\xi_1}{\p y_i} \frac{\p q}{\p
y_j}\right)\right\|_{H^{\beta-1}(B_{4r}\setminus\mathcal{O})}
\\
& + \left\|\sum_{i,j=1}^3 g^{ij} \frac{\partial^2 \xi}{\partial y_i \partial y_j}q\right\|_{H^{\beta -1}(B_{4r}\setminus \mathcal{O})}
+\|-(Mv+Nv)\cdot \vec{n}\|_{H^{\beta-\frac{1}{2}}(\p\Omega)}\\ & \left.
+\left\|\bar{J}^{-1}(\bar{J}\omega\times\omega)\cdot \vec{n}\right\|_{H^{\beta-\frac{1}{2}}(\p\Omega)}+\|(L_1+w\times y)\cdot
\vec{n}\|_{H^{\beta-\frac{1}{2}}(\p\Omega)}
+\|p_1\|_{L^2(B_{4r}\setminus\mathcal{O})}\right),
\end{aligned}\label{4.18}
\end{equation}
where $h_1(\cdot, \cdot)$ can be chosen an increasing function with
respect to both variables. In fact,
\begin{equation}
\begin{aligned}
&\ \ \ \left\|\sum_{i,j=1}^3\frac{\p g^{ij}}{\p y_i} \frac{\p
\xi_1}{\p y_j}q\right\|_{H^{\beta-1}(B_{4r}\setminus\mathcal{O})}+
\left\|\sum_{i,j=1}^3g^{ij}\left(\frac{\p\xi_1}{\p y_j} \frac{\p
q}{\p y_i}+\frac{\p\xi_1}{\p y_i}\frac{\p q}{\p y_j}\right)
\right\|_{H^{\beta-1}(B_{4r}\setminus\mathcal{O})}\\&\ \ \
+ \left\|\sum_{i,j=1}^3 g^{ij} \frac{\partial^2 \xi}{\partial y_i \partial y_j}q\right\|_{H^{\beta -1}(B_{4r}\setminus \mathcal{O})}
\\
&\le C\left( \|G\|_{W^{s,\infty}(\mathbb{R}^3)}\cdot
\|q\|_{H^{\beta-1}(B_{3r}\setminus B_{2r})} + \|G\|_{W^{\beta-1,
\infty}(\mathbb{R}^3)}\cdot \|\nabla q\|_{H^{\beta-1}(B_{3r}\setminus
B_{2r})}\right).
\end{aligned}\label{4.19}
\end{equation}

Combining the above estimates, one gets
\begin{equation}
\begin{aligned}
&\|q\|_{H^{\beta+1}(B_{2r}\setminus \mathcal{O})} \\ \leq & C(T, M_0, r)
\left( \|q\|_{H^{\beta-1}(B_{3r}\setminus B_{2r})} +
\|q\|_{L^2(B_{3r}\setminus \mathcal{O})}+ 1\right).
\end{aligned} \label{4.20}
\end{equation}

Choose some particular $q$ such that
$$\int_{B_{4r}\setminus \mathcal{O}} q(y) dy = 0.$$
It is reasonable, since $q$ is still a solution to (\ref{4.8}) if it is added by a constant. By Poincar\'{e}'s inequality,
\begin{equation}
\begin{aligned}
\|q\|_{H^{\beta+1}(B_{2r})} &\leq C(T, M_0,
r)\left(\|q\|_{H^\beta(B_{3r}\setminus B_{2r})} + \|\nabla
q\|_{L^2(B_{4r}\setminus \mathcal{O})} + 1 \right) \\
& \leq C(T, M_0, r) \left( \|q\|_{H^\beta(B_{3r}\setminus
B_{2r})} + 1\right).
\end{aligned}\label{4.21}
\end{equation}

It implies that high-order regularity of $q$ can be controlled by
the lower-order regularity. Therefore, using this method by choosing
appropriate $r$, we can get that for every $R>0$, such that
$\mathcal{O}\subseteq B_{\frac{R}{2}},$
\begin{equation}\label{4.22}
\|\nabla q\|_{H^s(\Omega_R)}\leq C(T, M_0, R)(1+\|v\|_{H_{s}}).
\end{equation}

Fix some $R$ big enough. Choose some smooth cut-off function
$\xi_2$, such that
 \begin{equation*}\begin{aligned}
 \xi_2(y)=\begin{cases} 0, &\ \ \ \mbox{if}\  |y|\le\frac{3}{4}R, \\1, &
 \ \ \ \mbox{if}\ |y|\ge R.
\end {cases}
\end{aligned}
\end{equation*}

 Since $g^{ij}=\delta_{ij}$ outside $B_{\frac{R}{2}}$, hence
  $$\di (G\cdot\nabla q)=\Delta q.$$
Let $p_2=\xi_2 q$, then \begin{equation}\label{4.23} \Delta
p_2=\xi_2(-\di (Mv+Nv))+ 2\nabla\xi_2\cdot \nabla q+\Delta\xi_2
q:=\tilde{f}. \end{equation} Therefore,
\begin{equation}\label{4.24}
\|\nabla p_2\|_{H^s(\mathbb{R}^3)} \leq
C\left(\|\tilde{f}\|_{H^{s-1}(\mathbb{R}^3)}+
\|\nabla p_2\|_{L^2(\mathbb{R}^3)}\right).
\end{equation}

$\tilde{f}$ is estimated as follows,
\begin{equation}
\begin{aligned}
&\ \ \ \ \|\tilde{f} \|_{H^{s-1}(\mathbb{R}^3)} \\
& \le C\left(\|\di (Mv+Nv)\|_{H^{s-1}(\Omega)}+\|\nabla
q\|_{H^{s-1}(\frac{R}{2}\le|y|\le R)}
+\|q\|_{H^{s-1}(\frac{R}{2}\le|y|\le R)}\right)\\
&\leq C(T, M_0)(1+\|v\|_{H_{s}}).
\end{aligned}\label{4.25}
\end{equation}

Hence \begin{equation}\label{4.26} \|\nabla
q\|_{H^s(\mathbb{R}^3\setminus B_{R})} \leq C(T, M_0)(1+\|v\|_{H_{s}}).
\end{equation}

(\ref{4.22}) and (\ref{4.26}) give that
\begin{equation}
\label{4.27} \|\nabla q\|_{H^s(\Omega)}\leq C(T, M_0)(1+\|v\|_{H_{s}}).
\end{equation}
It completes the proof of Proposition 4.1.
\end{proof}

\section{Construction of approximate solutions}
In this section, we will construct a sequence of approximate
solutions. Similar to \cite{RR}, the main idea is the Kato-Lai theory.
Suppose for every $n\geq 0$, there is a function $v^n(t)\in C_w([0, T]; H_s)$. Denote $L^n(t)=l_{\P v^n(t)}$,
$R^n(t)=\omega_{\P v^n(t)}.$ Then $L^n(t), R^n(t)\in C[0, T]$. Solving the following initial value
problem
\begin{equation}\left\{\begin{aligned}
\frac{\dif Q^n(t)}{\dif t} &=Q^n(t) A(R^n(t)),\\
Q^n(0)&=Id.
\end{aligned}\right.\label{5.1}
\end{equation}
One can get a solution $Q^n(t)$.

Define
$$l^n(t) =  Q^n(t)L^n(t),\ \ \ A(\omega^n(t))= Q^n(t) A(R^n(t)) [Q^n(t)]^T,$$
$$\psi^n=\xi\left((Q^{n})^{T}(t)(x-h^n(t))\right), \,\, h^n=\int_0^tQ^{n}(s) l^n(s)\dif s ,$$ and
$$V^n=l^n(t)+\omega^n(t)\times(x-h^n(t)),$$
$$W^n=l^n(t)\times(x-h^n(t))+\frac{|x-h^n(t)|^2}{2}\omega^n(t),$$
where $\xi$ is a cut-off function given in section 2.

Let $\Lambda^n=\psi^n V^n+\nabla\psi^n W^n.$ Hence one can define
$X^n(\cdot,t)$, $\Lambda^n(X^n(\cdot, t),t)$, $g^{ij,n}$,
$g^n_{ij}$, $\Gamma^{i,n}_{jk}$, $M^n, N^n$ given in (\ref{2.15}) and (\ref{2.23}). Suppose
that $q^n$ is the solution to the following system,
\begin{equation}\left\{\begin{aligned}
&{\rm div}\left(\sum_{j=1}^{3}g^{ij,n}\frac{\partial q^n}{\partial
y_j}\right)=-\di (M^n \P v^n+N^n
\P v^n), \, \,& & \text{in}\,\, \Omega\\
&\sum_{i,j=1}^3 g^{ij,n}\frac{\partial q^n }{\partial y_j}n_i
+\frac{1}{m}\left(\int_{\partial\Omega} q^n \vec{n}\dif\sigma\right)
\cdot
\vec{n}+\left[\bar{J}^{-1}\int_{\partial\Omega}y\times q^n \vec{n}\dif\sigma\right]\times y\cdot \vec{n}\\
&=-(M^n \P v^n+N^n \P v^n)\cdot
\vec{n}- (R^n\times L^n)\cdot \vec{n} +
\left[\bar{J}^{-1}(\bar{J}R^n\times R^n)\right]\times
y\cdot \vec{n}, \,\, && \text{on}\,\,\partial\Omega.
\end{aligned}\right.\label{5.2}
\end{equation}

Now define an operator $A^n(t,v)$ as in \cite{RR},
\begin{equation}
\begin{aligned}
&A^n(t,v)=\Vec{1}_{\Omega}\left(\frac{\p Y^{n-1} }{\p t}+\P
v^{n-1}\right)\cdot \nabla v
 -\mathcal{Q}\left[\Vec{1}_{\Omega}\left(\frac{\p Y^{n-1} }{\p t}+\P v^{n-1}\right)
 \cdot \nabla \P v\right]\\
& + \P\left[\vec{1}_{\Omega}
\left(\sum_{j,k=1}^3\left\{\Gamma_{j,k}^{\cdot ,n-1}\frac{\p
Y_k^{n-1}}{\p t} +\frac{\p Y^{n-1}}{\p x_k}\frac{\p^2 X_k^{n-1}}{\p
t\p y_j}
\right\}(\P v)_j^{n-1}+\sum_{j,k=1}\Gamma_{j,k}^{\cdot ,n-1}
(\P v)_j^{n-1} (\P v)_k^{n-1} \right)\right]\\
&\P \left[\vec{1}_{\Omega}\left(\sum_{j=1}^3 g^{\cdot
j,n-1}\frac{\p q^{n-1}}{\p y_j}\right)\right]
+\P \left[\vec{1}_{\mathcal{O}}\left(R^{n-1}\times
L^{n-1}-\bar{J}^{-1}(\bar{J} R^{n-1}
\times R^{n-1})\times y\right)\right]\\
& + \P \left[\vec{1}_{\mathcal{O}}\left(-\frac{1}{m}
\int_{\partial\Omega} q^{n-1}\vec{n}\dif\sigma-\left(\bar{J}^{-1}
\int_{\partial\Omega}y\times q^{n-1} \vec{n}\dif\sigma\right)\times
y\right)\right],
\end{aligned}\label{5.3}
\end{equation}
where the operator $\mathcal{Q}= I-\P$.

Let
$$v_0(y) = \left\{\begin{array}{l}
u_0(y), \ \ \ \ \ y\in \Omega,\\
l_0 + \omega_0\times y,\ \ \ y\in \mathcal{O}.\end{array}\right.$$
Consider the following Cauchy problem,
\begin{equation}
\left\{\begin{aligned}
v^n_t+A^n(t,v^n)=0,\\
v^n(0)=v_0 ,
\end{aligned}\right.\label{5.4}
\end{equation}
 where $v_0\in H_s\cap\tilde{X}_*.$

In particular, let $v^0(y,t) = v_0(y)$. We shall prove that for each $n\in \mathbb{N}$, there exists a solution
$v^n\in C_w(0,T_n; H_s)$ with some uniform lifespan $T_n$.

For simplicity,  denote
\begin{alignat*}{12}
(F^{n-1}v)_i&=\left(\frac{\p Y^{n-1} }{\p t}+ \P
 v^{n-1}\right)\cdot \nabla v_i, \,\, (G^{n-1}\cdot \nabla q^{n-1})_i
 =\sum_{j=1}^3 g^{ij,n-1}\frac{\p q^{n-1}}{\p y_j},\\
(E^{n-1})_i&=\vec{1}_{\Omega}\left[\left(\sum_{j,k=1}^3
\left\{\Gamma_{j,k}^{i,n-1}\frac{\p Y_k^{n-1}}{\p t}+\frac{\p
Y_i^{n-1}}{\p x_k}
\frac{\p^2 X_k^{n-1}}{\p t\p y_j}\right\}(\P v^{n-1})_j\right.\right.\\
&\left.\left.+\sum_{j,k=1}\Gamma_{j,k}^{i,n-1} (\P v^{n-1})_j (\P v^{n-1})_k\right)\right],\\
(K^{n-1})_i&=\left[\vec{1}_{\mathcal{O}}\left(\frac{1}{m}R^{n-1}\times L^{n-1}-\bar{J}^{-1}(\bar{J}R^{n-1}
\times R^{n-1})\times y\right)\right]_i\\
&-\left\{\vec{1}_{\mathcal{O}}\left[\frac{1}{m}\int_{\partial\Omega}
q^{n-1}\vec{n}
\dif\sigma-\left((\bar{J}^{-1}\int_{\partial\Omega}y\times q^{n-1}
\vec{n}\dif\sigma)\times y\right)\right]\right\}_i,\\
\end{alignat*}  and denote $M_0 = 2\|v_0\|_{H_{s}}$. Suppose that there exists some $T_0>0$ such
that for all $k<n$,
$$\|v^{k}\|_{L^{\infty}(0,T_0;H_s)}\leq M_0.$$

For the estimate of $( v, A^n(t,v))_{H_s}$,
\begin{equation*}
\begin{aligned}
|( v,A^n(t,v))_{H_s} |&\le
|(\vec{1}_{\Omega}F^{n-1}v,v)_{H_s}|+|(\mathcal{Q}(\vec{1}_{\Omega}F^{n-1}\P v),v)_{H_s}|
+|(\P E^{n-1}v,v)_{H_s}|\\
&+|(\P(\vec{1}_{\Omega}G^{n-1}\nabla q^{n-1}),v)_H|+|(\P K^{n-1},v)_{H_s}|\\
&:= J_1+J_2+J_3+J_4+J_5
\end{aligned}
\end{equation*}

Then we estimate them term by term. Starting from the easiest one,
\begin{equation}\begin{aligned}J_5 & \le\|\P K^{n-1}\|_{H_s}\|v\|_{H_s}\\
& \leq C\| K^{n-1}\|_{H_s}\|v\|_{H_s}  \\  & \le
C\left(\left|\int_{\partial \Omega} q^{n-1}
\vec{n}\dif \sigma\right|+\left|\int_{\partial \Omega}y\times q^{n-1}\vec{n} \dif \sigma\right|
+|L^{n-1}||R^{n-1}|+|R^{n-1}|^2\right)\|v\|_{H_s}\\
& \leq C\left(|L^{n-1}| \cdot |R^{n-1}|+ |R^{n-1}|^2 + \|\nabla q^{n-1}\|_{L^2(\Omega)}\right)\cdot \|v\|_{H_s}\\
& \leq C(T_0, M_0)\|v\|_{H_s}.
\end{aligned}\label{5.5}\end{equation}

By Lemma 3.3 and Lemma 3.4,
\begin{equation}\label{5.6}
\begin{aligned}J_4 & \le \|\P(\vec{1}_{\Omega}(G^{n-1}\nabla
q^{n-1}))\|_{H_s}\cdot \|v\|_{H_s} \\ & \leq C \|G^{n-1}\nabla
q^{n-1}\|_{H^s(\Omega)}\cdot \|v\|_{H_s} \\ &\le C(T_0,M_0)\|v\|_{H_s}.
\end{aligned}
\end{equation}

$J_3$ is also easy to estimate since there is no derivative of $v$
or $v^{n-1}$, \begin{equation}\label{5.7} J_3\le
C(T_0,M_0)\|v\|_{H_s}.\end{equation}

Now the most difficult terms $J_1$ and $J_2$ are left, since there
is derivative of $v$ or $\P v$.
\begin{equation}
\begin{aligned}
J_1= \left|\sum_{|\alpha|\le s} \sum_{\alpha_1 \leq \alpha}
\sum_{i=1}^3 \int_{\Omega}\p^{\alpha_1}\left(\frac{\p Y^{n-1} }{\p
t}+\P v^{n-1}\right)\cdot \nabla\p^{\alpha - \alpha_1}
v_i\p^{\alpha}v_i dy\right|.
\end{aligned}\label{5.8}
\end{equation}
When $\alpha_1=(0, 0, 0)$, since $$\di\left(\frac{\p Y^{n-1} }{\p
t}+\P v^{n-1}\right)=0 \ \ \mbox{in}\ \Omega,\ \ \ \mbox{and}\ \  \ \left(\frac{\p Y^{n-1}
}{\p t}+\P v^{n-1}\right)\cdot \vec{n}=0\ \ \mbox{on}\ \p\Omega,$$
then
$$\int_{\Omega}\left(\frac{\p Y^{n-1} }{\p t}+\P
 v^{n-1}\right)\cdot \nabla\p^{\alpha} v_i\p^{\alpha}v_i dy =0.$$

Therefore,  we assume that $|\alpha_1|\geq 1$. Let $\alpha_2 = \alpha - \alpha_1$.
\begin{equation*}
\begin{aligned}
&\left\|\p^{\alpha_1}\left(\frac{\p Y^{n-1} }{\p t}+ \P
 v^{n-1}\right)\cdot \nabla\p^{\alpha_2} v_i\right\|_{L^2(\Omega)}\\
&\le \left\|\p^{\alpha_1}\frac{\p Y^{n-1} }{\p t}\cdot
\nabla\p^{\alpha_2} v_i\right\|_{L^2(\Omega)}
+ \left\|\p^{\alpha_1}\P v^{n-1}\cdot \nabla\p^{\alpha_2} v_i\right\|_{L^2(\Omega)}\\
&\le
C(T_0,M_0)\|v\|_{H_s}+\|\P v^{n-1}\|_{L^{\infty}(\Omega)}\|\nabla
v\|_{H^{s-1}(\Omega)}
+\|\nabla v\|_{L^{\infty}(\Omega)}\|\P v^{n-1}\|_{H^{s}(\Omega)}\\
&\le C(T_0,M_0)\|v\|_{H_s}.
\end{aligned}
\end{equation*}

Hence, \begin{equation}\label{5.9} J_1\le
C(T_0,M_0)\|v\|_{H_s}^2.\end{equation}

For the term $J_2$,
\begin{alignat*}{11}
&J_2\le\|\mathcal{Q}[\vec{1}_{\Omega} F^{n-1}
\P v]\|_{H^s(\Omega)}
\|v\|_{H^s(\Omega)}+C\|F^{n-1} \P v\|_{L^2(\Omega)}\|v\|_{L^2(\mathcal{O})}.
\end{alignat*}
Herein,
\begin{alignat*}{11} \|F^{n-1} \P v\|_{L^2(\Omega)}
\|v\|_{L^2(\mathcal{O})} & \le\left\|\frac{\p Y^{n-1}}{\p
t}+\P v^{n-1}
\right\|_{L^\infty(\Omega)}\|\P v\|_{H_{s}}\|v\|_{L^2(\mathcal{O})}\\
&\le C(T_0,M_0)\|v\|_{H_s}^2.
\end{alignat*}

To estimate $\|\mathcal{Q}[\vec{1}_{\Omega} F^{n-1}
\P v]\|_{H^s(\Omega)}$, consider the following system,
\begin{equation*}
\left\{\begin{aligned} \Delta\phi=\di\left(\frac{\p Y^{n-1}}{\p t}+
\P v^{n-1}\right)\cdot\nabla \P v,
 \,\,\text{in}\,\, \Omega ,\\\
\frac{\p \phi}{\p n}=\left(\frac{\p Y^{n-1}}{\p t} +\P
v^{n-1}\right)\cdot\nabla \P v\cdot \vec{n}.\,\, \text{on}\,\,
\p\Omega.
\end{aligned}\right.
\end{equation*}
In fact, $\mathcal{Q}[\vec{1}_\Omega F^{n-1}\P v]= \nabla \phi.$

Note that in the domain $\Omega$,
$$\di \left[\left(\frac{\partial Y^{n-1}}{\partial t} + \P v^{n-1}\right)\cdot \nabla \P v \right]
= \sum_{i, j=1}^3 \partial_j \left(\frac{\partial Y^{n-1}}{\partial t}+ \P v\right)_i \partial_i (\P v)_j,$$
and on the boundary $\partial \Omega$,
$$\begin{array}{ll}\left(\frac{\partial Y^{n-1}}{\partial t}+ \P v^{n-1}\right)\cdot \nabla \P v\cdot \vec{n}
= & \left(\frac{\p Y^{n-1}}{\partial t }+ \P v^{n-1}\right)\cdot \nabla \left(
\P v - l_{\P v} - \omega_{\P v}\times y\right)\cdot \vec{n}\\
& + \left(\frac{\partial Y^{n-1}}{\partial t} + \P v^{n-1}\right)\cdot \nabla
\left(l_{\P v} + \omega_{\P v}\times y\right)\cdot \vec{n}.
\end{array}$$
Hence, as estimating $\nabla q$ in section 4, one can get that
$$\|\nabla \phi\|_{H^s(\Omega)}\le C(T_0,M_0)\|v\|_{H_s},$$
consequently,
$$J_2 \leq C(T_0, M_0)\|v\|_{H_s}^2.$$

Therefore, we have
\begin{equation}\label{5-add}
|(v,A^n(t,v))_{H_s}|\le C(T_0,M_0) (1+\|v\|_{H_s}^2).
\end{equation}



Now, fix some big $T_0$. Let's consider the corresponding ordinary differential equation,
$$\gamma^\prime(t) = C(T_0, M_0) (1 + \gamma(t)),\ \ \ \gamma(0) = \|v_0\|_{H_s}^2,$$
with $C(T_0, M_0)$ the same constant as in (\ref{5-add}). Assume that $T\leq T_0$ is a time such that for every
$t\in [0, T]$,
$$\gamma(t) \leq 4 \|v_0\|_{H_s}^2 = (M_0)^2.$$
Then by the Kato-Lai theory, the solution $v^n$ to (\ref{5.4}) can be derived at least on $[0, T]$, i.e., $v^n\in C_w([0, T]; H_s)\cap C_w^1([0, T]; \tilde{X})$,
and
$$\|v^n(t)\|_{H_s}^2 \leq \gamma(t)\leq (M_0)^2, \ \ \ \ t\in [0, T].$$

For $n=1$, we choose $v^0(y,t) = v_0(y)$. Following the above process, one can construct a solution
$v^1$ to the system (\ref{5.4}). By iterating the same steps, a sequence of approximate solutions $\{v^n\}$
can be constructed.

 \section{The convergence of approximate solutions }

 In this section, we show that $\{v^n\}$ converges to a solution of the system (\ref{2.16})-(\ref{2.22}).

According to the estimates in section 5,
\begin{equation}\label{6.1}\|v^n\|_{L^{\infty}(0,T; H_s)}\le M_0 ,
\end{equation}
\begin{equation}\label{6.2}
\|\partial_t v^n\|_{L^{\infty}(0,T; \tilde{X})}\le M_1,\end{equation}
\begin{equation}\label{6.3} \|\nabla q^n \|_{L^{\infty}(0,T; H^{s-1}(\Omega))}\leq
M_2.\end{equation}

Since $\P$ is a bounded operator on $H_s$, $\tilde{X}$, and
it commutes with $\partial_t$, then

\begin{equation}\label{6.4}
\|\P v^n\|_{L^\infty(0, T; H_s)}\leq M_3,
\end{equation}
\begin{equation}\label{6.5}
\|\partial_t \P v^n\|_{L^\infty(0, T; \tilde{X})}\leq M_4.
\end{equation}

Hence from the \cite{S}, there exists some function $v\in C_w([0,T]; H_s)$ such that,
\begin{equation}
v^n \rightarrow v\,\, \mbox{in}\,\  C_w(0,T; H_s),\label{6.6}
\end{equation}
\begin{equation}\label{6.7}
\P v^n \rightarrow \P v\ \  \mbox{in}\ \ C_w(0, T;
H_s).
\end{equation}

By the Aubin-Lions lemma, for every $r_0$ large enough,
\begin{equation}\label{6.8}
v^n \to v \ \ \mbox{in}\ \ C([0, T]; H^{s-1}(\Omega_{r_0})\cap
L^2(B_{r_0})),
\end{equation}
\begin{equation}\label{6.9}
\P v^n \to \mathbb{P}v \ \ \mbox{in}\ \ C([0, T];
H^{s-1}(\Omega_{r_0})\cap L^2(B_{r_0})).
\end{equation}

Moreover, (\ref{6.9}) implies that
\begin{equation}\label{6.10}
L^n(t) \to L(t)=l_{\P v(t)} \ \ \mbox{in}\ \ C[0, T],
\end{equation}
\begin{equation}\label{6.11}
R^n(t) \to R(t)= \omega_{\P v(t)}\ \ \mbox{in}\ \ C[0, T].
\end{equation}

While (\ref{6.3}), (\ref{6.8}) and (\ref{6.9}) tell that there exists some function $q$ such
that
\begin{equation}\label{6.13}
\nabla q^n \rightarrow \nabla q \ \ \mbox{in}\ \  C_w([0, T];
H^{s-1}(\Omega)),
\end{equation}
\begin{equation}\label{6.14}
\int_{\partial \Omega}q^{n} \vec{n} d\sigma \to \int_{\partial
\Omega}q\vec{n} d\sigma\ \ \mbox{in}\ \ C[0, T],
\end{equation}
\begin{equation}\label{6.15}
\int_{\partial\Omega}y\times q^n \vec{n}d\sigma \to
\int_{\partial \Omega}y\times q\vec{n} d\sigma\ \ \mbox{in}\ \
C[0, T].\end{equation} In fact, $q$ is a solution to the system
(\ref{4.1}). It can be seen by taking the limit of (\ref{5.2}).

From all the convergence results (\ref{6.1})-(\ref{6.15}), it follows that
\begin{equation}\label{6.16}
\left\{\begin{aligned}
&v_t+A(t,v)=0,\\
&v(0)=v_0,
\end{aligned}\right.
\end{equation}
where
\begin{equation*}
\begin{aligned}
&A(t,v)=\Vec{1}_{\Omega}\left(\frac{\p Y }{\p t}+ \P
v\right)\cdot \nabla v -\mathcal{Q}\left[\Vec{1}_{\Omega}
\left(\frac{\p Y }{\p t}+\P  v\right)\cdot \nabla \P v\right]\\
& + \P\left[\vec{1}_{\Omega} \left(\sum_{j,k=1}
\left\{\Gamma_{j,k}^{\cdot}\frac{\p Y_k}{\p t}+\frac{\p Y}{\p x_k}
\frac{\p^2 X_k}{\p t\p y_j}\right\}(\P v)_j+
\sum_{j,k=1}\Gamma_{j,k}^{\cdot} (\P v)_j (\P v)_k  \right)\right]\\
& + \P \left[\vec{1}_{\Omega}\left(\sum_{j=1}^3 g^{\cdot j}
\frac{\p q}{\p y_j}\right)\right]+
\P\left[\vec{1}_{\mathcal{O}}\left(R\times L+
\bar{J}^{-1}(\bar{J} R\times R)\times y\right)\right]\\
&
-\P\left[\vec{1}_{\mathcal{O}}\left(\frac{1}{m}\int_{\partial\Omega}
q\vec{n}\dif\sigma-\left(\bar{J}^{-1}\int_{\partial\Omega}y\times
q \vec{n}\dif\sigma\right)\times y\right)\right],
\end{aligned}
\end{equation*}

Next, we shall prove that $v$ is a  solution of the systems
(\ref{2.16})-(\ref{2.22}). The proof starts with the observation
that $v(t)=\P v(t)$, for all $t\in [0,T]$. In fact,
applying $\mathcal{Q}$ to each term in (\ref{6.16}) and taking the
inner product with $\mathcal{Q}v(t)$ in $\tilde{X}$ yields
\begin{equation}
\frac{\dif}{\dif
t}\frac{1}{2}\|\mathcal{Q}v(t)\|_{\tilde{X}}^2+(\mathcal{Q}v(t),
\mathcal{Q} A(t,v))_{\tilde{X}}=0. \label{6.17}\end{equation}

Note that $\di \left(\frac{\p Y }{\p t}+\P v\right)=0$ in $\Omega$
and $\left(\frac{\p Y }{\p t}+\P v\right)\cdot \vec{n}=0$ on
$\p\Omega$, then
\begin{equation}\label{6.18}
\begin{aligned}
(\mathcal{Q}v(t), \mathcal{Q} A(t,v))_{\tilde{X}}&=\left(\mathcal{Q}v(t), \vec{1}_{\Omega}\left(\frac{\p Y }{\p t}+\P v\right)\cdot \nabla \mathcal{Q}v(t) \right)_{\tilde{X}}\\
&=\int_{\Omega}\mathcal{Q}v\cdot\left(\left(\frac{\p Y }{\p t}+\P v\right)\cdot \nabla \mathcal{Q}v(t)\right) \dif y\\
&=0
\end{aligned}
\end{equation}

Since $v_0= \P v_0$, it tells that $\mathcal {Q} v_0=0.$
Hence, for every $t\in [0, T]$, $\mathcal{Q}v(t)=0$. Therefore,
(\ref{6.16}) can be written as
\begin{equation}\label{6.19}
\begin{aligned}
&\frac{\p v}{\p t}+\P[\vec{1}_\Omega(Mv+Nv+G\cdot\nabla q)]+\\
&\P\left[\vec{1}_{\mathcal{O}}\left(\frac{1}{m}R\times L-\bar{J}^{-1}(\bar{J}R\times R)\times y\right)\right.\\
& -\left.\vec{1}_{\mathcal{O}}\left(\frac{1}{m}\int_{\partial\Omega}
q\vec{n}\dif\sigma-(\bar{J}^{-1}\int_{\partial\Omega}y\times q
\vec{n}\dif\sigma)\times y\right)\right]=0
\end{aligned}
\end{equation}

Taking the inner product in $\tilde{X}$ with a test function
$\phi\in\tilde{X}_*$, one has
\begin{equation}
\begin{aligned}
&\int_{\Omega} (v'+Mv+Nv+G\cdot\nabla q)\cdot \phi\dif y \\
& + m L'\cdot l_{\phi} -\int_{\partial\Omega} q\vec{n}\dif\sigma\cdot
l_{\phi}+m (R \times L)
\cdot l_{\phi}\\
&+ \bar{J}R'\cdot\omega_\phi-\bar{J}\left[\bar{J}^{-1}(\bar{J}R\times R)\right]
\cdot\omega_\phi-\bar{J}\left[\bar{J}^{-1}\int_{\partial\Omega}y\times
q \vec{n}\dif\sigma
\right]\cdot\omega_\phi\\
&=0.
\end{aligned}\label{6.20}
\end{equation}

For every function $\phi\in C_0^\infty(\mathbb{R}^3)$, with
$supp(\phi) \subseteq \Omega$, and ${\rm div}~\phi =0$ in
$\mathbb{R}^3$, (\ref{6.20}) yields
$$\int_{\Omega} (v'+Mv+Nv+G\cdot\nabla q)\cdot \phi \dif y=0,$$

After the theory of Helmholtz-Weyl decomposition, there exists a function
$p$ such that $\nabla p\in L^{\infty}(0,T; H^{s-1}(\Omega))$ and
\begin{equation} v'+Mv+Nv+G\cdot\nabla q+\nabla p=0\,\,\ \text{in}\,\,\
\Omega\times[0,T].\label{6.21}\end{equation}

From the identification of $q$ and (\ref{6.21}), one knows that for
every $t\in [0, T]$,
\begin{equation*} \left\{\begin{aligned}
&\Delta p=0, \,\, &\text{in}\,\,\Omega,\\
&\frac{\p p}{\p \vec{n}}=0,\,\, &\text{on}\,\,\p\Omega.
\end{aligned}\right.
\end{equation*}

The above system has only constant solutions, thus
\begin{equation}\label{6.22}
v'+Mv+Nv+G\cdot\nabla q=0\ \ \ \text{in}\ \ \Omega\times[0,T].
\end{equation}

Now taking some test function $\phi(x)\in \tilde{X}$ such that
$\phi(y)=l_{\phi}$ in $\mathcal{O}$, then
\begin{equation*}
m L'\cdot l_{\phi} - \left(\int_{\partial\Omega}
q\vec{n}\dif\sigma\right) \cdot l_{\phi}+ (m R\times L) \cdot
l_{\phi}=0.
\end{equation*} Since $l_\phi$ is arbitrary, then
\begin{equation}\label{6.23} mL^\prime = \int_{\partial \Omega}
q\vec{n} \dif\sigma - R\times L.\end{equation}

Similarly, taking some test function $\phi(y)\in \tilde{X}$ such
that $\phi(y) = \omega_{\phi}\times y$ in $\mathcal{O}$, then
\begin{equation*}
\bar{J}R'\cdot\omega_\phi-\bar{J}(\bar{J}^{-1}(\bar{J}R\times R))
\cdot\omega_\phi-\bar{J}\left(\bar{J}^{-1}\int_{\partial\Omega}y\times q
\vec{n}\dif\sigma\right)\cdot\omega_\phi=0.
\end{equation*}

Thus
\begin{equation}\label{6.24}\bar{J}R'=\int_{\partial\Omega}y\times q
\vec{n}\dif\sigma+\bar{J}R\times R.\end{equation}

Therefore, $(v, q, L(t), R(t))$ is a solution to the system (\ref{2.16})-(\ref{2.22}).

\section{Uniqueness and continuity with respect to time}
In this section, we will prove that the solution of
the system (\ref{2.16})-(\ref{2.22}) is unique and then get the
continuity in $H_s$ with respect to time.

Assume that there exist two solutions $v^1,v^2\in C_{\rm w}([0,T];
H_s)\cap C^1_{\rm w}([0,T];\tilde{X})$ to the system
(\ref{2.16})-(\ref{2.22}), then
\begin{equation}\label{7.1}
v^1_t+M^1v^1+N^1v^1+G^1\nabla q^1=0,\ \ \text{in}\ \
\Omega\times[0,T],
\end{equation}
\begin{equation}\label{7.2}
v^2_t+M^2v^2+N^2v^2+G^2\nabla q^2=0,\ \ \text{in}\ \
\Omega\times[0,T].\end{equation}

Let $H^1 = (G^1)^{-1}$ and $H^2 =(G^2)^{-1}.$
Multiplying (\ref{7.1}) and (\ref{7.2}) by $H^1$ and $H^2$
respectively, and denote $K=\max\{\|v^1\|_{L^\infty(0,T;H_s)},\|v^2\|_{L^\infty(0,T;H_s)}\}$.

Subtracting the two equations and taking inner product
in $L^2(\Omega)$ with $v^1-v^2$, then one gets
\begin{equation*}
\begin{aligned}
&0=(H^1v^1_t-H^2v^2_t,v^1-v^2)_{L^2(\Omega)}+(\nabla q^1-\nabla q^2,v^1-v^2)_{L^2(\Omega)}\\
&\ \ \ +(H^1(M^1v^1+N^1v^1)-H^2(M^2v^2+N^2v^2),v^1-v^2)_{L^2(\Omega)}\\
&\ \ :=I_1+I_2+I_3,
\end{aligned}
\end{equation*}

Denote $l_{v^1}, \omega_{v^1}, l_{v^2}, \omega_{v^2}$ by $L^1,
R^1, L^2, R^2$ respectively.
 \begin{equation}\label{7.3}\begin{aligned}
 I_2 & =\displaystyle\int_{\Omega}\nabla(q^1-q^2)\cdot (v^1-v^2)\dif y
 \\
  & =\displaystyle\int_{\p\Omega}(q^1-q^2)(v^1-v^2)\cdot \vec{n}\dif\sigma
  \\
 &=\displaystyle\int_{\p\Omega}(q^1-q^2)(L^1-L^2)\cdot \vec{n} \dif \sigma
 +\int_{\p\Omega}(q^1-q^2)(R^1-R^2)\times y\cdot \vec{n}\dif\sigma
 \\
  &=\displaystyle m(L^1-L^2)'\cdot (L^1-L^2)+ m(R^1 \times L^1- R^2
  \times L^2)\cdot (L^1 - L^2) \\
   &\ \ + \displaystyle \bar{J}(R^1  - R^2)^\prime \cdot (R^1 - R^2)
   -(\bar{J}R^1\times R^1-\bar{J}R^2\times  R^2)\cdot(R^1- R^2)\\
 & = \displaystyle \frac{1}{2}m\frac{\dif}{\dif t}|L^1-L^2|^2+
 \frac{1}{2}\frac{\dif}{\dif
 t}\left[(\bar{J}(R^1- R^2))\cdot(R^1- R^2)\right]\\
 &\ \ +\displaystyle m(R^1\times L^1- R^2\times L^2)\cdot(L^1- L^2)-
 (\bar{J} R^1\times R^1-\bar{J}R^2\times R^2)\cdot(R^1- R^2).\end{aligned}
 \end{equation}

The term $I_1$ can be estimated as follows,
 \begin{equation}\label{7.4}\begin{aligned}
I_1 & =(H^1v^1_t-H^2v^2_t,v^1-v^2)_{L^2(\Omega)}\\
&=\left(H^1(v^1-v^2)_t,v^1-v^2\right)_{L^2(\Omega)}+\left((H^1-H^2)v^2_t,v^1-v^2\right)_{L^2(\Omega)}\\
&:=I_{11}+I_{12}. \end{aligned}\end{equation}

From the definition of $G$,
\begin{equation}\label{7.5}\begin{aligned}
I_{11} & =\left(H^1(v^1-v^2)_t,\  v^1-v^2\right)_{L^2(\Omega)}\\
&=\left(J^T_{X^1}J_{X^1}(v^1-v^2)_t,\ v^1-v^2\right)_{L^2(\Omega)}\\
&=\left(J_{X^1}(v^1-v^2)_t,\ J_{X^1}(v^1-v^2)\right)_{L^2(\Omega)}\\
&=\displaystyle \frac12\frac{\dif}{\dif t}\left(J_{X^1}(v^1-v^2),\
J_{X^1}(v^1-v^2)\right)_{L^2(\Omega)} \\[3mm] & \ \ \ - \displaystyle \left(\frac{\p
J_{X^1}}{\p t}(v^1 - v^2),\
J_{X^1}(v^1-v^2)\right)_{L^2(\Omega)}.\end{aligned}
\end{equation}
Therefore,
\begin{equation}\label{7.6} \displaystyle
I_{11} \ge \frac12 \frac{\dif}{\dif
t}\left\|J_{X^1}(v^1-v^2)\right\|_{L^2(\Omega)}^2
 -C(T, K)\sup_{s\in [0,t]}\|v^1(s)-v^2(s)\|_{L^2(\Omega)}^2.
 \end{equation}

On the other hand, by Lemma 3.5,
 \begin{equation}\label{7.7}\begin{aligned}
|I_{12}| & \le C\|g^1_{ij}-g^2_{ij}\|_{L^\infty(\Omega)}
\|v^1-v^2\|_{L^2(\Omega)}\|v^2_t\|_{L^2(\Omega)}\\
&\leq \displaystyle C(T, K)\left(\sup_{s\in
[0,t]}(|l^1(s)-l^2(s)|+|\omega^1(s)-\omega^2(s)|)\right) \sup_{s\in
[0,t]}\|v^1(s)-v^2(s)\|_{L^2(\Omega)}\\
&\leq\displaystyle C\sup_{s\in
[0,t]}\|v^1(s)-v^2(s)\|^2_{L^2(\mathbb{R}^3)}.
\end{aligned}\end{equation}

$I_3$ can be estimated similarly,
$$|I_3|\leq C\sup_{[0,t]}\|v^1(s)-v^2(s)\|^2_{L^2(\mathbb{R}^3)}.$$

Therefore,
\begin{equation*}
\begin{aligned}
&\frac{\dif }{\dif t}\|J_{X^1}(v^1-v^2)\|^2_{L^2(\Omega)}+
m\frac{\dif }{\dif t}|L^1-L^2|^2+
\frac{\dif }{\dif t}[(\bar{J}(R^1- R^2))\cdot(R^1- R^2)]\\
&\le C\sup_{[0,t]}\|v^1(s)-v^2(s)\|^2_{L^2(\mathbb{R}^3)}.
\end{aligned}
\end{equation*}
It follows that
\begin{equation*}
\begin{aligned}
&\|J_{X^1}(v^1-v^2)(t)\|^2_{L^2(\Omega)}+m|L_1-L_2|^2+(\bar{J}(R_1- R_2))\cdot(R_1-  R_2)\\
&\le C \int_0^t\sup_{s\in
[0,\tau]}\|v^1(s)-v^2(s)\|^2_{L^2(\mathbb{R}^3)}\dif\tau
\end{aligned}
\end{equation*}
Since $X^1$ is always a diffeomorphism and $\bar{J}$ is positive definite, thus
\begin{equation}\label{7.15}
\|v^1-v^2\|^2_{L^2(\mathbb{R}^3)}(t)\le C \int_0^t\sup_{\tau\in [0,s]}\|v^1-v^2\| ^2_{L^2(\mathbb{R}^3)}(\tau)\dif s
\end{equation}

 By the Gronwall's inequality, $v^1=v^2\,\, a.e.\,\,$ in $[0,T]\times\mathbb{R}^3.$
 Uniqueness, as in \cite{RR} or earlier publication \cite{Temam} for the Euler equations, combining the fact that
the system is reversible, implies
 $$v\in C([0,T];H_s)\cap C^1([0,T];H_{s-1}).$$

In fact, from the preceding estimates, one can easily get that
 \begin{equation}\label{7.8}\frac{d}{dt}\|v\|_{H^s(\Omega)}^2 + m\frac{d}{dt}|L|^2 + \frac{d}{dt}\left[\bar{J}R \cdot R\right]
 \leq C \|v\|_{H_s}^2 \left(\|\nabla v\|_{L^\infty(\Omega)} + \|v\|_{L^2(\R^3)} + 1\right).
\end{equation}

It implies that once $\|\nabla v\|_{L^\infty(\Omega)}$ does not blow up,  $\|v\|_{H_s}$ will not blow up. Using the argument as in the paper \cite{RR}, one can get that the lifespan of the solution does not depend on $s$.

\textbf{Acknowledgements}

Gratitude is expressed specially to supervisor Professor Zhouping Xin and the referees for their careful reading of the manuscript and their fruitful suggestions.

\end{document}